\documentclass{amsart}
\usepackage{graphicx} 

\usepackage[dvipsnames]{xcolor}
\usepackage[draft]{say}
\definecolor{MHcol}{RGB}{100, 180, 255}

\definecolor{egyptianblue}{rgb}{0.06, 0.2, 0.65}

\usepackage{amsthm}
\usepackage{amssymb}
\usepackage{amsmath}
\usepackage{parskip}
\usepackage{tikz}
\usepackage{tikz-cd}
\usepackage{bbm}
\usepackage{enumitem}
\usepackage{xcolor}
\usepackage{parskip}
\usepackage{import}
\usepackage{yhmath}
\usepackage{aliascnt}
\usepackage{standalone}
\usepackage{caption}
\usepackage{subcaption}
\usepackage{mathtools}

\usepackage{comment}

\usepackage[margin=1in,marginparwidth=0.8in, marginparsep=0.1in]{geometry}

\usepackage[bookmarks=true, bookmarksopen=true,%
bookmarksdepth=3,bookmarksopenlevel=2,%
colorlinks=true,%
linkcolor=blue,%
citecolor=blue,%
filecolor=blue,%
menucolor=blue,%
urlcolor=blue]{hyperref}

\usepackage{mathtools}

\newcommand{\bC}{\mathbb{C}}

\renewcommand{\H}{\mathbb{H}}

\DeclareMathOperator{\Id}{Id}

\newcommand{\bR}{\mathbb{R}}

\newcommand{\cW}{\mathcal{W}}

\let\Im\relax
\DeclareMathOperator{\Im}{Im}

\newcommand{\R}{\mathbb R}
\newcommand{\C}{\mathbb C}
\newcommand{\id}{\mathrm{id}}
\newcommand{\abs}[1]{\lvert #1\rvert}

\newcommand{\norm}[1]{\lVert #1\rVert}

\DeclareMathOperator{\Fuk}{Fuk}

\newtheorem{dummy}{dummy}[section]
\newtheorem{lemma}[dummy]{Lemma}
\newtheorem{theorem}[dummy]{Theorem}

\newtheorem{corollary}[dummy]{Corollary}

\theoremstyle{definition}

\newtheorem{example}[dummy]{Example}
\newtheorem{remark}[dummy]{Remark}

\newtheorem{eg/lem}[dummy]{Example/Lemma}

\title{Symplectomorphism invariance of the wrapped Fukaya category}

\author{Mingyuan Hu} \author {Vivek Shende}

\begin{document}

\begin{abstract}      An exact symplectomorphism of finite type Liouville manifolds 
    is called `strict' if it has bounded primitive; such symplectomorphisms are well known to induce deformation equivalences and hence equivalences of wrapped Fukaya categories.  Here we note that an arbitrary symplectomorphism of Weinstein manifolds, with no assumptions at infinity, can be used to construct an equivalence of Fukaya categories.  We combine this with a   pointwise criterion for constructing such symplectomorphisms to  improve  two statements in the mirror symmetry literature -- for very affine hypersurfaces and multiplicative hypertoric varieties -- where Fukaya categories were calculated for a Liouville form not known to be strictly exact symplectomorphic to the natural K\"ahler Liouville form on the space in question. 
    All results are obtained using ChatGPT Astra.
\end{abstract}

\maketitle

\thispagestyle{empty}

\section{Introduction}

A manifold $Y$ carrying a 1-form $\lambda$ is said to be {\em exact symplectic} if $d \lambda$ is symplectic.
If in addition
the vector field $Z$ satisfying $d\lambda(Z, \cdot) = \lambda$ can be integrated for all time,  and there is a proper, bounded below function $\eta: Y \to \mathbb{R}$ such that $d\eta(Z) > 0$ outside a compact subset, one says $(Y, \lambda)$ is a finite type Liouville manifold.  (We will henceforth drop the words `finite type'.) 


Given exact symplectic manifolds $(Y, \lambda)$ and $(Y', \lambda')$, a map $f: Y \to Y'$ is said to be an {\em exact symplectomorphism} if $f^* \lambda - \lambda' = dF$, for some function $F$.  One says an exact symplectomorphism of Liouville manifolds is {\em strict} if $F$ may be chosen to vanish outside a compact set.

 A 1-form $\lambda$ on $Y \times [0,1]$ is said to be a Liouville deformation if one can choose a proper, bounded below, function $\eta: Y \times [0,1] \to \mathbb{R}$ so that $(Y, \lambda_t, \eta_t)$ satisfies the above properties and there is a constant \(c\) such that \( d\eta_t(Z_t) > 0\) on \(\{\eta_t \le c\} \) for all \(t \in [0, 1]\).

Let us recall:    any symplectomorphism of  Liouville manifolds is isotopic to an exact symplectomorphism \cite[Lemma 11.2]{CE} and  Liouville manifolds are deformation equivalent if and only if they are strictly exact symplectomorphic~\cite[Proposition 11.8]{CE}.  However, there are Liouville manifolds that are  exact symplectomorphic but not strictly exact symplectomorphic, as witnessed by their having non-diffeomorphic ideal contact boundaries \cite[Cor. 4.5]{Courte}. 

Nevertheless, it is known that the symplectic homology of Liouville manifolds is invariant under (not necessarily strict) exact symplectomorphism \cite{Seidel-biased, mclean-lefschetz}.
The purpose of this note is to record that the same is true for the wrapped Fukaya category, and to give general a criterion guaranteeing the existence of (not necessarily strict) exact symplectomorphisms.  

Wrapped Fukaya categories of Liouville manifolds were introduced in \cite{FSS, AbouzaidSeidel}; we follow the  definition in \cite{GPS1, GPS2}, and write $\Fuk(Y, \lambda)$ for this category.  We recall that the core of a Liouville manifold is the locus which does not escape under the Liouville flow, and a Liouville manifold is said to be {\em weakly Weinstein} if, for every component of the smooth locus of its core, there is a properly embedded Lagrangian disk which meets it once and transversely.  (Weinstein manifolds are  those for which the Liouville flow is gradient-like for a Morse function; in this case, after perturbing to Morse-Smale, said disks are  obtained from the cocores of the  critical points of index $\dim Y /2$.) Many results of \cite{GPS2}, relevantly here the existence of the Viterbo functor, are only available in the weakly Weinstein case.   Here we show:


\begin{theorem}\label{thm:W_of_different_Liouville}
    Let $(Y, \lambda)$ and $(Y', \lambda')$ be weakly Weinstein manifolds.  If  $(Y, d\lambda)$ and $(Y', d\lambda')$ are symplectomorphic, then there is an equivalence of categories
    $
    \Fuk(Y, \lambda) \simeq \Fuk(Y', \lambda')$.
\end{theorem}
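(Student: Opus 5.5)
First I reduce to an exact symplectomorphism. By \cite[Lemma 11.2]{CE}, a symplectomorphism $(Y,d\lambda)\to(Y',d\lambda')$ is isotopic to an exact one. Pulling back along it, I may take $Y' = Y$ and $d\lambda' = d\lambda$, so $\lambda' = \lambda + dF$ for some $F$ that is possibly unbounded. The statement then becomes: two Liouville forms $\lambda$ and $\lambda+dF$ on one manifold, both weakly Weinstein, have equivalent wrapped Fukaya categories. The difficulty is that the interpolation $\lambda + t\,dF$ need not be a Liouville deformation. Near infinity the two Liouville flows can point in unrelated directions, and the ideal boundaries may even be non-diffeomorphic.

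The strategy is to build Liouville subdomain inclusions in both directions, each a strict exact symplectic embedding after deformation, and use Viterbo functors. Since $\lambda$ is Liouville, $Y$ is exhausted by compact Liouville domains $D_s=\{\eta\le s\}$ for $\lambda$, and likewise by domains $D'_s$ for $\lambda'$. Choose $D_0$ containing the core of $\lambda$, then $D'_0\supset D_0$ containing the $\lambda'$-core, then $D_1 \supset D'_0$, and so on. On the compact set $D_1$, the form $\lambda'$ differs from $\lambda$ by the exact form $dF$. Cutting $F$ off outside a neighbourhood of $D_1$ produces a form $\lambda_1 = \lambda + d(\chi F)$ which equals $\lambda'$ on $D'_0$ and $\lambda$ near infinity. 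For $\lambda_1$ to be Liouville and strictly deformation equivalent to $\lambda$, I need the cutoff region to sit inside a collar where $\lambda$ dominates. I would arrange this by applying the cutoff far out along the $\lambda$-flow, replacing $F$ by $F\circ\phi^{-T}_{Z}$-type rescalings: pushing by the Liouville flow multiplies $\lambda$ by $e^T$ but leaves $dF$ bounded on the compact region, so the cutoff error becomes negligible. Then $D'_0$ is a Liouville subdomain of $(Y,\lambda_1)\simeq(Y,\lambda)$, and Viterbo restriction \cite{GPS2} gives a functor $\Fuk(Y,\lambda)\to \Fuk(D'_0)=\Fuk(Y,\lambda')$. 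The last equality holds because the completion of $D'_0$ is $(Y,\lambda')$, as $D'_0$ contains the core. Symmetrically, there is a functor $\Fuk(Y,\lambda')\to\Fuk(Y,\lambda)$ from the inclusion $D_0\subset D'_0$ inside $\lambda'$.

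Next I show the two composites are equivalences. The composite $\Fuk(Y,\lambda)\to\Fuk(Y,\lambda')\to\Fuk(Y,\lambda)$ is the Viterbo restriction for the chain $D_0\subset D'_0\subset D_1$. By functoriality of Viterbo restriction, this equals the restriction for $D_0\subset D_1$ with respect to a Liouville form deformation equivalent to $\lambda$. Since $D_0$ contains the core, that restriction is an equivalence: it is the identity up to a Liouville flow, as in \cite{GPS2}, where restriction to a subdomain containing the core is an equivalence. The same holds for the other composite, so each functor has a left and a right inverse up to equivalence, and both are equivalences. The weakly Weinstein hypothesis enters because the existence and functoriality of Viterbo functors are only available in that case.

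I expect the main obstacle to be the cutoff step. I need $\lambda_1$ to be an honest Liouville form that is deformation equivalent to $\lambda$ through Liouville forms. I also need the intermediate domain $D'_0$ to be a Liouville subdomain, meaning $\lambda_1=\lambda'$ is outward transverse along $\partial D'_0$, with compatibility with the Viterbo composition law. I would first try to carry out the interpolation on a collar $[s,s+L]\times\partial D_1$ for $\lambda$, with $L$ large enough that $d(\chi F)$ is dominated by $e^r\alpha$ there.
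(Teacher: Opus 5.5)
Your architecture is the paper's: reduce to $\lambda'=\lambda+dF$ by \cite[Lemma 11.2]{CE}, interleave Liouville domains for the two forms, glue the primitives with a cutoff, and get left and right inverses from functoriality of Viterbo restriction. The problem is the step you flag as the main obstacle, which you leave open and cannot close the way you propose.

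What you want there holds automatically. Since $\chi F$ has compact support, $\lambda_1$ agrees with $\lambda$ outside a compact set, so it is a finite type Liouville form and $\lambda+t\,d(\chi F)$ is a Liouville deformation. Also, a $\lambda$-domain (resp.\ $\lambda'$-domain) is a Liouville subdomain for the glued form as soon as $\chi\equiv0$ (resp.\ $\chi\equiv1$) near its boundary. Transversality in the transition region is never used.

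Your plan to achieve that transversality cannot work. On a $\lambda$-collar with coordinate $r$ and $\chi=\chi(r)$, with $\iota_{X_h}d\lambda=dh$, the Liouville field of $\lambda_1$ is $Z_\lambda+\chi' F\,X_r+\chi X_F$. Since $dr(X_r)=0$,
\[
dr(Z_{\lambda_1})=(1-\chi)+\chi\,dr(Z_{\lambda'}),
\]
which is independent of $\chi'$ and of the length of the collar. Where $\chi=1$ you would need $Z_{\lambda'}$ outward transverse to a $\lambda$-level hypersurface enclosing $D'_0$. The region it bounds would then be a Liouville domain for both forms containing both cores. By the same formula with $\chi$ replaced by a constant $t$, $\lambda+t\,dF$ would be a path of Liouville domains on it. So your construction succeeds only when $\lambda$ and $\lambda'$ are already deformation equivalent, where the theorem is classical; it fails, e.g., for the examples of \cite{Courte}.

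Second, you need a single glued form for which all the nested domains are Liouville subdomains at once. Your $\lambda_1$ equals $\lambda'$ on all of $D'_0$, so $\partial D_0$ need not be convex for it, and ``the Viterbo restriction for the chain $D_0\subset D'_0\subset D_1$'' is not yet defined. The paper takes $A_0\Subset B_0\Subset A_1\Subset B_1$, with the $A_i$ domains for $\lambda$ and the $B_i$ for $\lambda'$. It sets $\lambda_*=\lambda+d(\chi F)$ with $\chi=0$ near $\partial A_0\cup\partial A_1$, and $\chi=1$ near $\partial B_0\cup\partial B_1$ and outside $B_1$.

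For $\lambda_*$ the cobordism $A_1\setminus A_0$ contains the convex hypersurface $\partial B_0$, so it need not be trivial. Restriction $A_1\to A_0$ is an equivalence because the pair deforms back to $\lambda$ keeping both boundaries convex, not merely because $A_0$ contains the core; likewise $B_1\to B_0$ deforms back to $\lambda'$.

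The fourth domain is what closes the argument. The middle restriction $F_2$ from $A_1$ to $B_0$ has a right inverse via $F_2F_1$ (restriction $B_1\to B_0$) and a left inverse via $F_3F_2$ (restriction $A_1\to A_0$). With your three domains only $D_1\to D'_0\to D_0$ is a chain of restrictions. Your ``other composite'' is not one until you add $D'_1$, and then the argument applies to the single functor $D_1\to D'_0$.
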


We are motivated by the problem of computing wrapped Fukaya categories of smooth affine algebraic varieties, and the approach via microsheaves on Lagrangian skeleta.  It is often simpler to compute the skeleton for some hand-crafted Liouville form $\lambda$, rather than the natural primitive  $\lambda_K$ of a K\"ahler form; we would like to be able to conclude that such choices are irrelevant to the determination of the Fukaya category. 
To acheive this, we combine Theorem \ref{thm:W_of_different_Liouville} with a pointwise criterion for when two Liouville forms are symplectomorphic. 

Let \( (X, \omega_0 = d\beta) \) be a Liouville manifold. Fix an \(\omega_0\)-compatible almost complex structure \(J\), i.e.
\[
\omega_0(Jv, Jw) = \omega_0(v, w), \qquad \text{ and } g(v,w) \coloneqq \omega_0 (v, Jw)
\]
is a Riemannian metric. Let \(Z_\beta\) be the Liouville vector field, given by \(\iota_{Z_\beta} d\beta = \beta\). 

We also assume that there is a proper function \(R: X\to [1, \infty) \), such that outside a compact set of \(X\), \( \beta = - d R\circ J\) and \(Z_{\beta} (R) \le R\).  (Note this  implies  $dR(Z_\beta) >0$.)


\begin{example}
If  \( X = \overline{X} \cup_{\partial{\overline{X}}} (\partial \overline{X} \times [r, \infty) ) \), then we can choose \(R\) to be the projection to \( [r, \infty)\), and \( J \) sending \( Z_\beta\) to the Reeb field on the contact manifold \((\partial \overline{X} \times \{R\}, \frac{1}{R} \beta) \). In this case \(Z_\beta(R) = R\). 
\end{example}

\begin{lemma}\label{lem:stein}
Let \( X \subset \bC^n\) be a closed complex submanifold for the standard complex structure $I$.  We write  $z_\nu=x_\nu+i y_\nu$ for the holomorphic coordinates on $\bC^n$, and set: 
\[
 R=1+\frac14\sum_{\nu=1}^N|z_\nu|^2,
 \qquad
 \beta=-dR\circ I
      =\frac12\sum_{\nu=1}^N
        (x_\nu\,dy_\nu-y_\nu\,dx_\nu), \qquad g=d\beta(-,I-)
\]
Note $g$ is the Euclidean  metric  on $\mathbb C^N$.
Then \(R\) is proper on \(X\),  $Z_\beta(R) = |dR|_g^2 \le R - 1 < R$. 
In particular \(Z_\beta\) is complete.
\end{lemma}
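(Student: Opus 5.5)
Properness of $R$ on $X$ is immediate: $X$ is closed in $\bC^N$, and $R = 1 + \frac14|z|^2$ is proper on $\bC^N$, so its restriction to a closed subset is proper. The real content is the identity $Z_\beta(R) = |dR|_g^2$ and the bound $|dR|_g^2 \le R-1$. Throughout, $\beta$, $g$, $d\beta$ and $Z_\beta$ are understood as restricted or intrinsic to $X$. Because $X$ is a complex submanifold, $I$ preserves $TX$, and the restriction of $g$ is the induced Euclidean metric.

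For the identity, I will show that $Z_\beta$ on $X$ equals $-I\nabla^X R$, where $\nabla^X R$ is the $g|_X$-gradient of $R|_X$, and up to sign conventions it could equally be $I\nabla^X R$. Since $\beta = -dR\circ I$, for $w \in TX$ we have $\beta(w) = -dR(Iw) = -g(\nabla^X R, Iw)$. This uses that $Iw \in TX$, so only the tangential part of the gradient matters. On the other hand, for a tangent vector $v$,
\[
d\beta(v, w) = g(v, -Iw)\quad\text{(from } g = d\beta(-,I-)\text{ and } I^2=-1\text{)}.
\]
Matching the two expressions gives $v = \nabla^X R$ up to the sign fixed by the conventions. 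So $Z_\beta$ is, up to sign, the gradient of $R$ (or $I$ applied to it). I will track the signs carefully so that the formula reads $\iota_{Z}d\beta = \beta$. Once $Z_\beta = \nabla^X R$, we get $Z_\beta(R) = dR(\nabla^X R) = |dR|^2_{g|_X}$. The main obstacle is this sign and convention bookkeeping. As a sanity check I will use the ambient $\bC^N$, where $Z_\beta = \frac12\sum_\nu (x_\nu\partial_{x_\nu} + y_\nu\partial_{y_\nu})$, which is indeed $\nabla R$ for the Euclidean metric.

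For the bound, the gradient on $X$ is the orthogonal projection of the ambient gradient, so $|dR|_{g|_X} \le |\nabla^{\bC^N} R|$. The ambient gradient is $\frac12(x,y)$, whose squared norm is $\frac14|z|^2 = R-1$. Hence
\[
Z_\beta(R) = |dR|^2 \le R-1 < R.
\]

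Completeness then follows from a standard Gronwall-type argument. Along a flow line, $\frac{d}{dt}R = Z_\beta(R) \le R$, so $R(\phi_t(p)) \le e^{t}R(p)$ for $t \ge 0$. For backward time, $\frac{d}{dt}R \ge 0$ along the forward flow, so $R$ is nonincreasing backwards and bounded below by $1$. In both directions the trajectory therefore stays in a sublevel set of $R$ on every finite time interval. These sublevel sets are compact by properness, so no trajectory escapes in finite time, and $Z_\beta$ is complete.
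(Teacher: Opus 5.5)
Your proof is correct and follows the paper's route. You identify $Z_\beta$ with the $g|_X$-gradient of $R|_X$, view it as the orthogonal projection of the ambient gradient $\frac12\sum_\nu(x_\nu\partial_{x_\nu}+y_\nu\partial_{y_\nu})$, bound its norm by $\frac12|z|$, and your Gronwall argument spells out the completeness the paper leaves as ``in particular''. You should drop the opening hedge that $Z_\beta$ might be $\pm I\nabla^X R$ or $-\nabla^X R$, since the first would give $Z_\beta(R)=0$ and the second $Z_\beta(R)=-|dR|^2$; your own matching computation, or the one-liner $g(Z_\beta,v)=d\beta(Z_\beta,Iv)=\beta(Iv)=-dR(I^2v)=dR(v)$ for $v\in TX$, pins down $Z_\beta=\nabla^X R$ with no sign ambiguity.
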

\begin{proof}
\(R\) is proper on $X$ since it is proper on $\C^n$ and $X$ is a closed submanifold.  Moreover, 
 \(Z_\beta=\nabla_g(R|_X)\) is a projection of \(\nabla_g R=\frac12\sum_i(x_i\partial_{x_i}+y_i\partial_{y_i})\). Hence \(Z_\beta(R)=|dR|_X|_g^2\le|dR|_g^2 =\frac14|z|^2=R-1\). 
\end{proof}

For a symplectic form \(\omega\), we denote: 
\begin{equation*}\label{eq:inversenorm}
\omega^\flat(v)=\iota_v\omega,\qquad
\norm{\omega^{-1}}_g
\coloneqq\norm{(\omega^\flat)^{-1}}_{g^*, g}
\coloneqq \frac{1}{\displaystyle\inf_{\abs{v}_g=1}\abs{\iota_v\omega}_{g^*}}.
\end{equation*}

\begin{theorem}\label{thm:bound}
Fix $X, \omega_0 = d \beta, J, R$ as above.  
Let $\alpha$ be another  1-form on $X$, so that $\omega=d\alpha$ is a symplectic form {\emph weakly}  tamed by \(J\):
\begin{equation}\label{eq:weak_tame}
\omega(v,Jv)\ge0\qquad\text{for every }v\in TX.
\end{equation}
Suppose there is a constant $C>0$ and a smooth function $B:[1,\infty)\to(0,\infty)$ such that 
\begin{equation}\label{eq:growth_B}
B(r)\ge r,\qquad
\int_1^\infty\frac{r}{B(r)^2}\,dr=\infty,
\qquad \norm{\omega^{-1}}_g\le C B(R)
\end{equation}
outside a compact subset of $X$.
Then there is a diffeomorphism $F:X\to X$, smoothly isotopic to the identity, and a smooth function $h:X\to\R$ 
such that
\begin{equation}\label{eq:conclusion}
F^*\alpha=\beta+dh.
\end{equation}
\end{theorem}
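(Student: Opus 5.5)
The approach is a Moser-type argument along the linear path of primitives $\alpha_t = (1-t)\beta + t\alpha$, $t\in[0,1]$. The point is to control the Moser vector field at infinity with the growth hypothesis \eqref{eq:growth_B}, so that its flow is complete even though nothing is assumed about $\alpha$ at infinity.

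\textbf{Step 1: nondegeneracy along the path.} Set $\omega_t = d\alpha_t = (1-t)\omega_0 + t\omega$. For $v\neq 0$ we have $\omega_0(v,Jv) = |v|_g^2>0$ and $\omega(v,Jv)\ge 0$ by \eqref{eq:weak_tame}, so $\omega_t(v,Jv)\ge (1-t)|v|_g^2>0$ for $t<1$. At $t=1$ the form $\omega$ is symplectic by assumption. Hence every $\omega_t$ is symplectic.

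We also need a uniform bound on $\norm{\omega_t^{-1}}_g$. For $t\le 1/2$, the estimate $\omega_t(v,Jv)\ge \tfrac12|v|^2$ gives $|\iota_v\omega_t|_{g^*}\ge \tfrac12|v|$. For $t\ge 1/2$, I would use the positivity in a sharper form. Pairing with $Jv$ gives
\[
|\iota_v\omega_t|\,|v| \ge \omega_t(v,Jv) = (1-t)|v|^2 + t\,\omega(v,Jv),
\]
and the task is to bound this below by a multiple of $|\iota_v\omega| \cdot |v|$ coming from $\omega$ itself. One way is to note that the weakly tamed symmetric part $q(v)=\omega(v,Jv)\ge 0$ gives Cauchy--Schwarz control of the degenerate directions. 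I expect some bookkeeping is needed here. The target is
\[
\norm{\omega_t^{-1}}_g \le C'\max\bigl(1,\norm{\omega^{-1}}_g\bigr) \le C'' B(R)
\]
outside a compact set, uniformly in $t$. If convexity fails to give this directly, I would instead reparametrize the path, using $\omega_0 + s\omega$, $s\in[0,\infty)$, rescaled. The key point is that $\omega_t^\flat = (1-t)\omega_0^\flat + t\omega^\flat$ is a sum of two maps whose symmetric parts relative to $J$ are nonnegative, which should bound the inverse by the inverse of the "more invertible" summand. This uniform bound is the main obstacle.

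\textbf{Step 2: the Moser field and its growth.} Define $V_t$ by $\iota_{V_t}\omega_t = -(\alpha-\beta)$. Then $\mathcal L_{V_t}\alpha_t + (\alpha-\beta) = d(\iota_{V_t}\alpha_t)$, so wherever the flow $\phi_t$ exists,
\[
\phi_t^*\alpha_t = \beta + d h_t, \qquad h_t=\int_0^t \phi_s^*(\iota_{V_s}\alpha_s)\,ds.
\]
For completeness we estimate $|V_t|\le \norm{\omega_t^{-1}}_g\,|\alpha-\beta|_g$. Since $\alpha$ is arbitrary, $|\alpha|$ is not directly controlled. So I would first modify $\alpha$ by an exact form: choose a cutoff and replace $\alpha$ by a primitive normalized against $\beta$, or, better, estimate only the radial component $dR(V_t)$. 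We have $dR = \beta\circ J^{-1}$-type, and $|dR|_g = |\beta|_g \lesssim R^{1/2}$ by the hypothesis $Z_\beta(R)=|dR|^2\le R$.

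If $|\alpha-\beta|$ cannot be controlled, the alternative is to run Moser backwards on the time-dependent symplectic forms using the family of $\omega_t$ and primitives in a Liouville-flow-adapted way. I would try first to show
\[
|dR(V_t)| \le C B(R)\,R^{1/2}\cdot(\text{bounded}),
\]
after arranging $|\alpha-\beta|_g\lesssim R^{1/2}$ by cutting off. The ODE $\dot r \le C B(r)\sqrt r$ with comparison function $\int dr/(B(r)\sqrt r)$ is only implied by $\int r/B^2=\infty$ after Cauchy--Schwarz. More naturally, $\int r\,dr/B(r)^2=\infty$ matches the estimate $\frac{d}{dt}R^2$-type growth, which suggests working with $\rho = \int_1^R r/B(r)^2\,dr$. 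In that variable $|d\rho(V_t)|$ should be bounded, so $\rho$, and hence $R$, stays finite in finite time.

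\textbf{Step 3: conclusion.} Completeness gives $\phi_1$, a diffeomorphism isotopic to the identity, with $\phi_1^*\alpha=\beta+dh_1$. Set $F=\phi_1$ and $h=h_1$.
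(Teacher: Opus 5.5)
\paragraph{The gap is in your Step 2.} It is not a matter of bookkeeping. Along the linear path the Moser velocity is $\alpha-\beta$ at every time, including $t$ near $0$, where $\omega_t\approx\omega_0$ does nothing to shrink it. Nothing in the hypotheses controls $\alpha$, and the linear-path flow really does fail.

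Take $X=\C$ with $\beta=\tfrac12(x\,dy-y\,dx)$, $R=1+\abs{z}^2/4$, and $\alpha=\beta+df$ with $f=y(1+x^2)$. All hypotheses hold with $B(r)=r$, and the theorem is trivial here with $F=\id$. But $\omega_t=dx\wedge dy$ for all $t$, and your $V_t$ is the time-independent field $-f_y\partial_x+f_x\partial_y$. So $\dot x=-(1+x^2)$, and the trajectory starting at $x=-1$ escapes at $t=\pi/4<1$.

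\paragraph{Your proposed repairs do not work.}
\begin{enumerate}
\item Adding an exact form so that $\abs{\alpha-\beta}_g\lesssim R^{1/2}$ is impossible in general. The hypotheses bound $\norm{\omega^{-1}}_g$, not $\omega$. For instance $\omega=e^{R}\omega_0$ is allowed, and by Stokes on unit disks every primitive of $\omega-\omega_0$ is super-polynomially large somewhere on each far-out unit circle.
\item Even granting that bound, you only get $\abs{dR(V_t)}\le\abs{dR}_g\norm{\omega_t^{-1}}_g\abs{\alpha-\beta}_g\lesssim R\,B(R)$, not $B(R)R^{1/2}$. For $B(r)=r$ the comparison equation $\dot r=r^2$ blows up in finite time. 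Bounding $d\rho(V_t)$ for $\rho=\int_1^R r\,B(r)^{-2}\,dr$ would require $\abs{dR(V_t)}\lesssim B(R)^2/R$.
\end{enumerate}

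By contrast, the step you flagged as the main obstacle is fine. Define $T$ by $g(Tv,w)=\omega(v,Jw)$. Both $T$ and $T^{-1}$ are accretive, which gives $\norm{\omega_t^{-1}}_g\le\min\{(1-t)^{-1},\,t^{-1}\norm{\omega^{-1}}_g\}$ directly.

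\paragraph{The missing idea.} Use $\alpha$ as a Moser velocity only when the symplectic form dominates $\abs{\alpha}_g$. The paper runs four paths
$$\alpha\to\alpha+k\beta\to\alpha+A\beta\to A\beta\to\beta.$$
\begin{itemize}
\item \emph{Third leg.} Here $A(R)\ge 1+\abs{\alpha}_g$ is a nondecreasing radial function with no growth restriction. On this leg $\dot\mu_s=-\alpha$ and $d\mu_s(v,Jv)\ge A\abs{v}_g^2$, so the Moser field satisfies $\abs{X_s}_g\le\abs{\alpha}_g/A\le 1$.
\item \emph{Second and fourth legs.} These only change the radial coefficient of $\beta$. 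Along the path of increasing coefficient, $R$ is nonincreasing and $R\rho_u(R)$ is nondecreasing along trajectories, which controls the flow in both time directions. This needs the starting coefficient to satisfy $r\rho_0(r)\to\infty$, which is why one cannot go directly from $\alpha$ to $\alpha+A\beta$.
\item \emph{First leg.} The growth hypothesis enters only here. The velocity is $k\beta$ with $k=b\sqrt{S(R)}/R$ and $S=1+\int_1^R u\,B(u)^{-2}\,du$; your $\rho$ is exactly $S-1$. Let $W$ be the field with $\iota_W d\alpha=-k\beta$, and set $g_k=d(k\beta)(\cdot,J\cdot)$ and $m=k+Rk'>0$. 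Comparing $X_t$ with $W$ via $\abs{X_t}_{g_k}\le\abs{W}_{g_k}$ and $m\,g\le g_k\le k\,g$ shows that $\abs{X_t\log S(R)}$ is bounded. Moreover, $\log S(R)$ is proper precisely because $\int_1^\infty r/B(r)^2\,dr=\infty$.
\end{itemize}
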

Note that all the conditions are pointwise. In practice, we often take \( B(r) = r \) or \(B(r) = r \sqrt{1+\log (r)}\). 

\begin{corollary} \label{cor: equivalent}
    In the setup of Theorem \ref{thm:bound}, if \(\alpha\) and \(\beta\) are two (weakly) Weinstein structures on \(X\), then 
    \(\Fuk(X, \alpha) \cong \Fuk(X, \beta)\). 
\end{corollary}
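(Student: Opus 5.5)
The plan is to chain Theorem \ref{thm:bound} with Theorem \ref{thm:W_of_different_Liouville}.

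\textbf{Step 1 (a symplectomorphism from Theorem \ref{thm:bound}).} We are in its setup: $\omega = d\alpha$ is weakly tamed by $J$ and satisfies the growth bound \eqref{eq:growth_B}. The theorem therefore gives a diffeomorphism $F: X \to X$, isotopic to the identity, and a function $h$ with $F^*\alpha = \beta + dh$. Applying $d$ gives
\[
F^* d\alpha = d\beta .
\]
So $F$ is a symplectomorphism
\[
F : (X, d\beta) \longrightarrow (X, d\alpha).
\]
In fact $F$ is an exact symplectomorphism, though possibly not a strict one, since nothing is said about the support of $h$.

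\textbf{Step 2 (apply Theorem \ref{thm:W_of_different_Liouville}).} Take $(Y,\lambda) = (X,\beta)$ and $(Y',\lambda') = (X,\alpha)$. By hypothesis both are (weakly) Weinstein manifolds. A Weinstein manifold is weakly Weinstein, by the remark that cocores of the Morse--Smale perturbation supply the required disks. Since $(X,d\beta)$ and $(X,d\alpha)$ are symplectomorphic via $F$, Theorem \ref{thm:W_of_different_Liouville} gives
\[
\Fuk(X,\alpha) \simeq \Fuk(X,\beta).
\]

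\textbf{Main point to check.} The only real issue is that the hypotheses match. The corollary treats $\alpha$ and $\beta$ as Liouville structures on the same manifold $X$, so completeness of the Liouville flows and finite type are part of the assumption that they are (weakly) Weinstein. Theorem \ref{thm:W_of_different_Liouville} needs no control at infinity, so the possible failure of strictness in Step 1 does no harm.
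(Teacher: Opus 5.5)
Your proposal is correct and is essentially the paper's intended argument: Theorem \ref{thm:bound} supplies an exact, possibly non-strict, symplectomorphism $F\colon (X,d\beta)\to(X,d\alpha)$, and Theorem \ref{thm:W_of_different_Liouville}, which needs no control at infinity, then gives the equivalence for the two weakly Weinstein structures. Because $F$ is already exact, the appeal to \cite[Lemma 11.2]{CE} at the start of the proof of Theorem \ref{thm:W_of_different_Liouville} is not needed here.
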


We give the proof of Theorem~\ref{thm:bound} in Section \ref{sec: proof of thm:bound}.  
The proof is also formalized in \emph{Lean}; the formalization (using libraries from ~\cite{fltz}) is available at \url{https://github.com/Hu-Mingyuan/exact-liouville-comparison}. 

We have found the following special case to be repeatedly useful:

\begin{corollary} \label{cor: quadratic poisson}
Let $X \subset \C^N$ be a smooth affine algebraic variety; let $\beta$ be the standard Liouville form as in Lemma \ref{lem:stein}.
Let $\Omega$ be a holomorphic symplectic form on $X$, whose Poisson brackets are given by quadratic polynomials in the standard holomorphic coordinates $z_1, \ldots, z_N$.  

If $\alpha$ is a smooth real one-form satisfying $d\alpha=\operatorname{Im}\Omega$, then there is a diffeomorphism $\Phi:X\to X$, smoothly isotopic to
the identity, and a smooth function $f:X\to\mathbb R$ such that
$\Phi^*\beta=\alpha+df$.
\end{corollary}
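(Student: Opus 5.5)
We will apply Theorem~\ref{thm:bound} with $(X,\omega_0=d\beta)$, $J=I$ the standard complex structure, $R=1+\frac14\sum|z_\nu|^2$ as in Lemma~\ref{lem:stein}, and the second form $\alpha$ with $\omega=d\alpha=\operatorname{Im}\Omega$. Lemma~\ref{lem:stein} supplies the standing hypotheses: $R$ is proper, $\beta=-dR\circ I$, and $Z_\beta(R)\le R$. Theorem~\ref{thm:bound} then gives a diffeomorphism $F$ isotopic to the identity with $F^*\alpha=\beta+dh$. Setting $\Phi=F^{-1}$ and $f=-h\circ F^{-1}$ yields $\Phi^*\beta=\alpha+df$, as required. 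We must therefore check two hypotheses: weak taming \eqref{eq:weak_tame}, and the pointwise bound $\norm{\omega^{-1}}_g\le C B(R)$ for some admissible $B$. We take $B(r)=r$, since $\int_1^\infty r/r^2\,dr=\infty$.

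\emph{Taming.} Since $\Omega$ is a holomorphic $(2,0)$-form, $\Omega(Iv,Iw)=i^2\Omega(v,w)=-\Omega(v,w)$. Hence $\operatorname{Im}\Omega(v,Iv)=-\operatorname{Im}\Omega(Iv,I^2v)=\operatorname{Im}\Omega(Iv,v)=-\operatorname{Im}\Omega(v,Iv)$, so $\omega(v,Iv)=0\ge 0$. Weak taming therefore holds trivially. (This is presumably why the condition is stated in its weak form.)

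\emph{Norm bound.} This is the main step. First note that $\norm{\omega^{-1}}_g$ is the operator norm of the real Poisson bivector $\pi=\omega^{-1}$, viewed as a map $T^*X\to TX$ with respect to $g$. The holomorphic Poisson bivector $P=\Omega^{-1}$ has components $P(dz_a|_X,dz_b|_X)=\{z_a,z_b\}$, which are quadratic polynomials. Up to a universal constant (a factor of $2$ or $4$ from taking imaginary parts), the real bivector $\pi$ associated to $\operatorname{Im}\Omega$ is the real part of $P$ or a fixed multiple of it. So $\abs{\pi}_g$ is comparable to $\abs{P}_g$, the norm of $P$ as a complex bivector on $T^{1,0}X$ with the metric induced from $\C^N$.

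To bound $\abs{P}$, embed $T^*X$ as a quotient of $T^*\C^N|_X$. Any covector $\xi$ on $T_xX$ of unit norm is the restriction of a covector $\tilde\xi=\sum c_a\,dz_a$ (plus its conjugate) with $\abs{c}\le 1$, namely its orthogonal extension. Then
\[
P(\xi,\eta)=\sum_{a,b}c_a d_b\{z_a,z_b\}(x).
\]
This gives $\abs{P}_x\le \bigl(\sum_{a,b}\abs{\{z_a,z_b\}(x)}^2\bigr)^{1/2}\le C(1+\abs{z}^2)\le C'R$, since each bracket is a quadratic polynomial. Combining, $\norm{\omega^{-1}}_g\le C''R=C''B(R)$ everywhere, in particular outside a compact set.

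The expected obstacle is bookkeeping: verifying that the bivector defined by the brackets of the ambient coordinate functions, restricted to $X$, really computes $\omega^{-1}$ with the stated norm. The key points are that $dz_a|_X$ span $T^{*1,0}X$, and that the orthogonal-extension argument shows the norm on $X$ is bounded by the ambient norm. One must also track the real/complex conversion constants, which I would fix by a direct check on $\C^2$ with $\Omega=dz_1\wedge dz_2$.
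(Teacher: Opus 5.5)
Your proposal is correct and follows essentially the same route as the paper: apply Theorem~\ref{thm:bound} with $J=I$, $R$ from Lemma~\ref{lem:stein} and $B(r)=r$; get weak taming from $\operatorname{Im}\Omega(v,Iv)=0$; and bound $\norm{(\operatorname{Im}\Omega)^{-1}}_g\le CR$ using the quadratic ambient brackets $\{z_a,z_b\}$, the fact that the embedding is isometric, and a fixed real-linear conversion from the holomorphic bivector. You also spell out the inversion $\Phi=F^{-1}$, $f=-h\circ F^{-1}$, which the paper leaves implicit.
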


We give the proof of  Corollary \ref{cor: quadratic poisson} in Section \ref{sec: quadratic poisson}.

\subsection*{AI Statement}
Special cases of all results stated above were discovered autonomously by ChatGPT Astra in the process of our investigations of mirror symmetry, the results of which will appear in future articles.  The main human contributions to the article were to seek formulations of the results in the natural generality, clarify the functoriality of Viterbo restriction,  select the illustrations in Section \ref{sec:applications} (the proofs of which were again provided by Astra), and provide the exposition.

\subsection*{Acknowledgements}
We are supported by Villum Fonden Villum Investigator 37814.

\section{Functoriality of Viterbo restriction}

We recall the construction of Viterbo restriction from \cite{GPS2}.
We write $\H$ for the half-plane sector. 
Recall that if $W$ is a Liouville sector, $X$ is a Liouville domain, and $X \subset \partial_\infty W$ is a Liouville hypersurface embedding, then there is a canonical (up to contractible choices) sector inclusion  $(\mathbb{H} \times X) \setminus Nbd(X) \subset   W  \setminus Nbd(X)$.  Here, we include $X \subset \partial_\infty (\H \times X)$ as the fiber over some point in the ideal boundary of $\H$.

Then for $Y \subset X$ a Liouville subdomain, there are maps $$\Fuk(X) \xhookrightarrow{s_X} \Fuk((\H \times X) \setminus Nbd(X)) \xrightarrow{v_{XY}} \Fuk((\H \times X) \setminus Nbd(Y))  \xleftarrow{i_Y} \Fuk((\H \times Y) \setminus Nbd(Y) ) \xhookleftarrow{s_Y} \Fuk(Y)$$
Here, $s_X$ and $s_Y$ multiply objects by the cotangent fiber, and $v_{XY}$ and $i_Y$ are all induced from sector inclusions.  

When $X, Y$ are weakly Weinstein, $s_X, s_Y$ and $i_Y$ are all equivalences; inverting $i_Y, s_Y$ defines the Viterbo restriction functor.

Given inclusions $Z \subset Y \subset X$, it is natural to expect the Viterbo restrictions to be functorial.  This was not explicitly checked in \cite{GPS2}.  One could deduce it by passing by \cite{GPS3} to microsheaves and invoke the functoriality of Viterbo restriction established in that context in \cite{Li-Nadler-Shende}.  Here we observe that functoriality of Viterbo restriction is immediate  from the following diagram: 

\begin{equation} \label{functoriality of Viterbo}
\begin{tikzcd}
	{\mathrm{Fuk}((\mathbb{H} \times X) \setminus Nbd(X))} & {\mathrm{Fuk}((\mathbb{H} \times X) \setminus Nbd(Y))} & {\mathrm{Fuk}((\mathbb{H} \times X) \setminus Nbd(Z))} \\
	& {\mathrm{Fuk}((\mathbb{H} \times Y) \setminus Nbd(Y))} & {\mathrm{Fuk}((\mathbb{H} \times Y) \setminus Nbd(Z))} \\
	&& {\mathrm{Fuk}((\mathbb{H} \times Z) \setminus Nbd(Z))}
	\arrow["{v_{XY}}", from=1-1, to=1-2]
	\arrow["{v_{XZ}}", bend left, from=1-1, to=1-3]
	\arrow[from=1-2, to=1-3]
	\arrow["\cong", from=2-2, to=1-2]
	\arrow["{v_{YZ}}", from=2-2, to=2-3]
	\arrow[from=2-3, to=1-3]
	\arrow["\cong"'{pos=0.8}, shift right=5, bend right, dashed, from=3-3, to=1-3]
	\arrow["\cong", from=3-3, to=2-3]
\end{tikzcd}
\end{equation}

Here, the dashed arrow is dashed only for typographical reasons.  The arrows marked $\cong$ are those established to be isomorphisms in the original argument from \cite{GPS2}; of course it follows that the remaining vertical arrow is also an isomorphism, hence that the unlabeled horizontal arrow can be identified with $v_{YZ}$. 

\section{Proof of Theorem \ref{thm:W_of_different_Liouville}}


Per \cite[Lemma 11.2]{CE}, we may find an exact symplectomorphism; pulling back by it, we reduce to the case of \( \alpha\) and \(\beta = \alpha + d f\)  two Liouville forms on \(X\).  
We choose submanifolds-with-boundary 
    \begin{equation}\
    \label{eq:four_domains}
     A_0\Subset B_0\Subset A_1\Subset B_1 \Subset X,
    \end{equation}
    such that \(A_i\) are Liouville domains for \(\alpha\) completing to $X$, and \(B_i\) are Liouville domains for \(\beta\) completing to $X$. We do not ask that the $\partial A_i$ is convex for $\beta$ or that $\partial B_i$ is convex for $\alpha$.

    Choose a  function \(\chi: X\to \bR\), such that 
\begin{equation}\label{eq:chi}
 \chi=0\text{ near }\partial A_0\cup\partial A_1,
 \qquad
 \chi=1\text{ near }\partial B_0\cup\partial B_1 \text{ and outside of} B_1, 
\end{equation}
and put
\( \lambda_*:=\alpha+d(\chi f) \). Then the four domains~\eqref{eq:four_domains} are Liouville subdomains of \( (M, \lambda_*)\). Hence we have the Viterbo restriction functors:
\[
\cW( B_1, \lambda_*) \xrightarrow{F_1} \cW(A_1, \lambda_*) \xrightarrow{F_2} \cW(B_0, \lambda_*) \xrightarrow{F_3} \cW(A_0, \lambda_*). 
\]
By condition~\eqref{eq:chi}, \( (B_i, \beta )\) and \((A_i, \alpha)\) are deformation equivalent to \( (B_i, \lambda_*) \) and \((A_i, \lambda_*)\), respectively. Hence,
\[
\cW(B_1, \lambda_*) \simeq \cW(B_0, \lambda_*) \simeq \cW(X, \beta), \qquad 
\cW(A_1, \lambda_*)  \simeq \cW(A_0, \lambda_*) \simeq \cW(X, \alpha).
\]
By the functoriality of Viterbo restriction \eqref{functoriality of Viterbo}, we have 
\[
F_2\circ F_1 \simeq \Id_{\cW(X, \beta)}, \qquad F_3 \circ F_2 \simeq \Id_{\cW(X, \alpha)}.
\]
Therefore, \(F_2\) induces an equivalence between \( \cW( X, \alpha)\) and \(\cW(X, \beta) \). 

\begin{remark}
The same argument implies symplectomorphism invariance of any 
invariant of Liouville manifolds which admits a Viterbo restriction functor.  
\end{remark}

\section{Proof of Theorem \ref{thm:bound}} \label{sec: proof of thm:bound}

The idea is to consider four paths of one forms:
\[
\alpha\longrightarrow\alpha+k\beta
\longrightarrow\alpha+A\beta
\longrightarrow A\beta
\longrightarrow\beta
\]
where \(k\) and \(A\) are some functions of \(R\) we choose later. Heuristically, \(k\) is small and \(A\) is large. For each path \(\mu_t\), use the Moser's trick and define the time-dependent vector field \(X_t\) by
\[
\iota_{X_t}d\mu_t  = -\dot \mu_t,
\]
and then solve the ODE:
\[
\frac d{dt}\phi_t(x)=X_t(\phi_t(x)),\qquad \phi_0 = \id. 
\]
Once we show that the solution exists for all \(t\in [0, 1]\),  by Cartan's formula,
\[
\frac d{dt}\phi_t^*\mu_t
=\phi_t^*\bigl(\dot\mu_t+\iota_{X_t}d\mu_t
+d(\mu_t(X_t))\bigr)
=d\bigl(\phi_t^*(\mu_t(X_t))\bigr),
\]
which implies
\begin{equation}\label{eq:moser_exact}
\phi_t^*\mu_t=\mu_0+dH_t,\qquad
H_t(x)=\int_0^t\mu_s(X_s)(\phi_s(x))\,ds.
\end{equation}
In particular, \(\phi_1\) is an exact symplectomorphism from \((X, d\mu_0)\) to \((X, d\mu_1)\). 
The only issue is therefore to show that each of the four Moser flows exists up to time \(1\). For this we use the following standard criterion.

\begin{lemma}\label{lem:complete}
Let \(X_t\), \(t\in[0,1]\), be a smooth time-dependent vector field on \(X\). Suppose that every solution of \(\dot x(t)=X_t(x(t))\) stays in a compact subset of \(X\) on its maximal interval of existence. Then for every \(s\in[0,1]\) and \(x\in X\), the solution with \(x(s)=x\) exists for all \(t\in[0,1]\). In particular, the maps \(\phi_t\) are defined on all of \(X\) and are diffeomorphisms.

The hypothesis holds if there is a proper function \(P:X\to\bR\), bounded from below, and a constant \(C\) such that \(|X_tP|\le C\) on \(X\) for all \(t\in[0,1]\). 
\end{lemma}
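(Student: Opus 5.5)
The argument splits into two parts: first the general ODE fact, then the criterion via $P$.

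\emph{First part: solutions confined to a compact set exist on all of $[0,1]$.} Fix $s\in[0,1]$ and $x\in X$. By the local existence and uniqueness theorem for smooth time-dependent ODEs, there is a unique maximal solution $x:(a,b)\cap[0,1]\to X$ with $x(s)=x$. Suppose $b\le 1$ and the interval is open at $b$. By hypothesis the trajectory lies in a compact set $K$. On $[0,1]\times K$ the vector field has bounded norm for an auxiliary complete metric, so $x(t)$ is uniformly Lipschitz. Hence $x(t)$ is Cauchy as $t\to b$ and converges to a point of $K$. Solving again from that limit point extends the solution past $b$, contradicting maximality. The same argument handles the lower endpoint $a$. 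So the solution exists on all of $[0,1]$.

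It follows that the flow maps $\phi_{t,s}$ are defined on all of $X$ for all $s,t\in[0,1]$. They are smooth by smooth dependence on initial conditions. Uniqueness gives $\phi_{s,t}\circ\phi_{t,s}=\id$, so each $\phi_t=\phi_{t,0}$ is a diffeomorphism with inverse $\phi_{0,t}$.

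\emph{Second part: the criterion via $P$.} Let $x(t)$ be any solution on its maximal interval $I\subset[0,1]$, with $x(s)=x_0$. Then
\[
\Bigl|\tfrac{d}{dt}P(x(t))\Bigr|=|(X_tP)(x(t))|\le C,
\]
so $P(x(t))\le P(x_0)+C|t-s|\le P(x_0)+C$ for $t\in I$. Since $P$ is proper and bounded below, the sublevel set $\{P\le P(x_0)+C\}$ is compact. It contains the whole trajectory, so the hypothesis of the first part holds.

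The only point needing care is the extension step in the first part. There one must use boundedness of $X_t$ on the compact set $[0,1]\times K$, with respect to an auxiliary metric, to get convergence of $x(t)$ as $t\to b$. This is standard.
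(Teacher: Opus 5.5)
Your proposal is correct and follows essentially the same route as the paper: solutions confined to a compact set extend to all of $[0,1]$, each $\phi_t$ is inverted by flowing backwards (smooth by smooth dependence on initial conditions), and the bound $|P(x(t))-P(x(s))|\le C$ confines each trajectory to the compact sublevel set $\{P\le P(x(s))+C\}$. You also spell out the extension step (bounded velocity on $[0,1]\times K$, so the trajectory converges as $t\to b$), which the paper only asserts.
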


\begin{proof}
A solution of an ODE which stays in a compact set extends past any finite time, so all solutions exist on \([0,1]\). The inverse of \(\phi_t\) is obtained by solving the ODE backwards from time \(t\) to time \(0\), and is smooth by smooth dependence on initial conditions. For the last claim: along a solution, \(|P(x(t))-P(x(s))|\le C|t-s|\le C\), so \(x(t)\) stays in the compact set \(\{P\le P(x(s))+C\}\) for any \(s\).
\end{proof}

We will also repeatedly use the following observation. If \(d\mu_t\) is nondegenerate, evaluating the Moser equation \(\iota_{X_t}d\mu_t=-\dot\mu_t\) on the vector \(JX_t\) gives
\begin{equation}\label{eq:energy}
d\mu_t(X_t,JX_t)=-\dot\mu_t(JX_t).
\end{equation}
When \(d\mu_t\) tames \(J\), the left hand side is a positive definite quadratic form in \(X_t\), so~\eqref{eq:energy} bounds the size of \(X_t\) in terms of \(\dot\mu_t\).

\subsection{The first path}
Let us fix \(r_0>1\) such that~\eqref{eq:growth_B} and the identities \(\beta=-dR\circ J\), \(Z_\beta(R)\le R\) hold on \(\{R\ge r_0\}\). Note that \(\beta=-dR\circ J\) means \(g(Z_\beta,v)=d\beta(Z_\beta,Jv)=\beta(Jv)=dR(v)\), i.e. \(Z_\beta=\nabla_gR\), so that
\begin{equation}\label{eq:a}
|\beta|_g^2=|dR|_g^2=Z_\beta(R)\le R\qquad\text{on }\{R\ge r_0\}.
\end{equation}
Take a cutoff function
\[
\chi:[1,\infty)\longrightarrow[0,1],\qquad
\chi=0\ \text{ in }[1,r_0],\qquad
\chi=1\ \text{ in }[r_0+1,\infty)
\]
and define
\[
S(r)=1+\int_1^r\frac{u}{B(u)^2}\,du,
\qquad
k(r)=
\exp\left(
\int_1^r
\chi(u)\left(\frac{S'(u)}{2S(u)}-\frac1u\right)\,du
\right),
\qquad
m(r)=k(r)+rk'(r).
\]
We take
\[
\mu_t = \alpha + t k \beta, \qquad t\in[0,1],
\]
and show that the induced flow exists in \(t \in [0, 1]\).

\begin{lemma}\label{lem:k}
The function \(k\) satisfies \(k=1\) on \([1,r_0]\), \(0<k\le1\), \(k'\le0\), and \(m>0\). For \(r\ge r_0+1\) there is a constant \(b>0\) such that
\[
k(r)=b\frac{\sqrt{S(r)}}r,\qquad
m(r)=\frac{br}{2B(r)^2\sqrt{S(r)}}.
\]
In particular \(rk(r)\to\infty\) as \(r\to\infty\).
\end{lemma}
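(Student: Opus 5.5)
We verify each claim directly from the definitions; everything is elementary calculus. Write $\log k(r)=\int_1^r \chi(u)\bigl(\tfrac{S'(u)}{2S(u)}-\tfrac1u\bigr)\,du$.

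\emph{Behaviour on $[1,r_0]$.} There $\chi=0$, so the integrand vanishes and $k=1$.

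\emph{Sign of $k'$ and the bounds $0<k\le 1$.} Differentiating gives
\[
\frac{k'}{k}=\chi(r)\Bigl(\frac{S'(r)}{2S(r)}-\frac1r\Bigr).
\]
The bracket is nonpositive. Indeed $S'(r)=r/B(r)^2\le 1/r$ because $B\ge r$, and $S\ge1$, so $S'/(2S)\le 1/(2r)<1/r$. Since $\chi\ge0$, we get $k'\le0$. As $k$ is an exponential, $k>0$; together with $k(1)=1$ and $k'\le0$ this gives $k\le1$.

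\emph{Positivity of $m$.} We have
\[
m=k+rk'=k\Bigl(1+\chi\bigl(\tfrac{rS'}{2S}-1\bigr)\Bigr)=k\Bigl((1-\chi)+\chi\,\tfrac{rS'}{2S}\Bigr).
\]
Both summands are $\ge0$ because $0\le\chi\le1$. They cannot vanish simultaneously: $\tfrac{rS'}{2S}=\tfrac{r^2}{2B^2S}>0$, so if $\chi=0$ the first term is $1$, and otherwise the second is positive. Hence $m>0$. This rewriting of $m$ as a convex combination of two positive quantities is the only step needing a moment's care.

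\emph{Explicit formulas for $r\ge r_0+1$.} There $\chi=1$, so for $r\ge r_0+1$
\[
\log k(r)=\log k(r_0+1)+\tfrac12\log\frac{S(r)}{S(r_0+1)}-\log\frac{r}{r_0+1}.
\]
Thus $k(r)=b\sqrt{S(r)}/r$ with
\[
b=\frac{k(r_0+1)(r_0+1)}{\sqrt{S(r_0+1)}}>0.
\]
Then $rk=b\sqrt S$, so
\[
m=(rk)'=\frac{bS'}{2\sqrt S}=\frac{br}{2B(r)^2\sqrt{S(r)}}.
\]

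\emph{Growth.} Finally, $rk=b\sqrt{S(r)}\to\infty$ because $S(r)\to\infty$ by the divergence hypothesis $\int_1^\infty r/B(r)^2\,dr=\infty$ in \eqref{eq:growth_B}.
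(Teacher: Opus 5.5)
Your proof is correct and follows the paper's argument essentially step by step. The bound $S'/(2S)\le 1/(2r)$ gives the sign of the integrand, the factorization $m=k\bigl((1-\chi)+\chi\,r^2/(2B^2S)\bigr)$ gives $m>0$, and integrating with $\chi=1$ yields the explicit formulas (your explicit $b$ and the identity $m=(rk)'$ only add detail) and the growth of $rk$.
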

\begin{proof}
Since \(B(u)\ge u\) and \(S\ge1\), \(\frac{S'(u)}{2S(u)}=\frac{u}{2B(u)^2S(u)}\le\frac1{2u}\). So the integrand in the definition of \(k\) is \(\le0\) and vanishes on \([1,r_0]\). This gives the first three claims. Differentiating,
\[
m(r)=k(r)\left(
1-\chi(r)+\chi(r)\frac{r^2}{2B(r)^2S(r)}
\right)>0.
\]
For \(r\ge r_0+1\), \(\chi=1\), so \((\log k)'=\frac{S'}{2S}-\frac1r\) and \(k=b\sqrt S/r\) for some \(b>0\). The formula for \(m\) follows. Finally \(rk(r)=b\sqrt{S(r)}\to\infty\), since \(S(r)\to\infty\) by~\eqref{eq:growth_B}.
\end{proof}

We define the bilinear form
\[
g_k\coloneqq d(k\beta)(\cdot,J\cdot).
\]

\begin{lemma}\label{lem:gk}
\(g_k\) is a Riemannian metric, i.e. \(d(k\beta)\) is a \(J\)-compatible symplectic form, and
\[
m\,g\le g_k\le k\,g .
\]
\end{lemma}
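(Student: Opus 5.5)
Write $k=k(R)$, so that $d(k\beta)=k\,d\beta+k'(R)\,dR\wedge\beta$. Using $\beta=-dR\circ J$, we compute the associated bilinear form:
\[
g_k(v,w)=k\,g(v,w)+k'\bigl(dR(v)\,\beta(Jw)-\beta(v)\,dR(Jw)\bigr)=k\,g(v,w)+k'\bigl(dR(v)dR(w)+\beta(v)\beta(w)\bigr).
\]
Here we used $\beta(Jw)=dR(w)$ and $dR(Jw)=-\beta(w)$, both consequences of $J^2=-1$. This identity holds where $\beta=-dR\circ J$, in particular on $\{R\ge r_0\}$. On $\{R\le r_0+1\}$ there is nothing to compute: there $k\equiv1$ near $\{R\le r_0\}$, so $g_k=g$ and the claimed bounds are trivial ($m=k=1$). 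To make this clean, the plan first checks the regions: on $\{R\le r_0\}$ we have $k'=0$ and $k=1$, while on $\{R\ge r_0\}$ the formula above is valid. Hence the formula holds globally, with the correction term vanishing where $k'=0$.

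Next we observe that $g_k$ is symmetric, so it is a symmetric bilinear form; $J$-invariance of $d(k\beta)$ then follows. Since $Z_\beta=\nabla_g R$ and $\beta=g(Z_\beta,\cdot)$ up to the sign convention, we have $\beta=-g(JZ_\beta,\cdot)$ or similar, so $dR$ and $\beta$ are $g$-duals of the $g$-orthogonal vectors $Z_\beta$ and $JZ_\beta$. Both have length $|dR|_g$, and $g(Z_\beta,JZ_\beta)=0$ by compatibility. Thus the quadratic form $Q(v)=dR(v)^2+\beta(v)^2$ satisfies
\[
0\le Q(v)\le |dR|_g^2\,|v|_g^2\le R\,|v|_g^2,
\]
using~\eqref{eq:a} for the last inequality. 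It is $\le |dR|^2|v|^2$ because it is the squared norm of the projection of $v$ onto the span of the orthogonal pair $Z_\beta, JZ_\beta$, each of length $|dR|$.

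With $k'\le0$ from Lemma~\ref{lem:k}, the bounds follow:
\[
g_k(v,v)=k|v|^2+k'Q(v)\le k|v|^2,
\qquad
g_k(v,v)\ge (k+Rk')|v|^2=m(R)|v|^2.
\]
Since $m>0$ by Lemma~\ref{lem:k}, $g_k$ is positive definite, so it is a Riemannian metric. Consequently $d(k\beta)$ is nondegenerate, hence symplectic (it is closed because it is exact) and $J$-compatible.

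The only delicate point is the sign and orthogonality bookkeeping in the identification of $Q$ as the squared length of a projection, together with making sure that~\eqref{eq:a} is used only on $\{R\ge r_0\}$, where $k'$ may be nonzero. I expect this to be routine once the conventions for $\beta=-dR\circ J$ are fixed consistently.
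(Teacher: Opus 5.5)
Your argument is correct and is essentially the paper's: expand $g_k(v,v)=k\,|v|_g^2+k'\bigl(dR(v)^2+dR(Jv)^2\bigr)$ (your $\beta(v)^2$ equals $dR(Jv)^2$), bound the correction by $|dR|_g^2|v|_g^2\le R|v|_g^2$ using the $g$-orthogonal, equal-length pair $Z_\beta, JZ_\beta$, and conclude from $k'\le 0$ and $m>0$. Two small slips are harmless: $k$ need not equal $1$ on $(r_0,r_0+1]$, where $\chi$ is switching on, so your sentence about $\{R\le r_0+1\}$ is false, but your split at $r_0$ already covers that region; and the correct identifications are $dR=g(Z_\beta,\cdot)$ and $\beta=g(JZ_\beta,\cdot)$, where the signs do not matter because they enter squared.
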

\begin{proof}
On \(\{R\le r_0\}\) we have \(k=1\), so \(g_k=g\). On \(\{R>r_0\}\),
\( d(k\beta) = k d\beta -k'\,dR\wedge(dR\circ J) \). Both terms are \(J\)-invariant, so \( d( k \beta) ( J \cdot, J\cdot) = d(k \beta)( \cdot, \cdot) \), the form \(g_k\) is symmetric, and
\[
g_k(v,v)
=k |v|_g^2
+k' \left(dR(v)^2+dR(Jv)^2\right).
\]
Write \(dR(v)=g(Z_\beta,v)\) and \(dR(Jv)=-g(JZ_\beta,v)\), where \(Z_\beta\) and \(JZ_\beta\) are orthogonal and of the same length. Then \( 0\le dR(v)^2 + dR(Jv)^2 \le |dR|_g^2|v|^2_g \le R|v|_g^2\) by~\eqref{eq:a}. Since \(k'\le0\), this gives \(mg\le g_k\le kg\). As \(m>0\), the form \(g_k\) is positive definite.
\end{proof}

Recall that \(\mu_t = \alpha + t k \beta\). By Lemma~\ref{lem:gk} and the weak taming assumption~\eqref{eq:weak_tame},
\[
d\mu_t(v,Jv)=d\alpha(v,Jv)+t\,g_k(v,v)\ge t\,m\,|v|_g^2,
\]
so \(d\mu_t\) tames \(J\) for \(t\in(0,1]\) and is in particular nondegenerate; for \(t=0\), \(d\mu_0=d\alpha\) is nondegenerate by assumption. Define the Moser vector field \(X_t\) and the (time independent) vector field \(W\) by
\[
\iota_{X_t} d \mu_t = - k\beta, \qquad
\iota_W  d\alpha = - k\beta.
\]

\begin{lemma}\label{lem:XW}
\(|X_t|_{g_k}\le |W|_{g_k}\) for \(0\le t\le1\).
\end{lemma}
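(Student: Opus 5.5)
The plan is to compare the two defining equations directly and test their difference against $J$ applied to the difference of the vector fields. The point of this choice is that $d\alpha$ only enters through the weak taming inequality~\eqref{eq:weak_tame}, while $d(k\beta)$ enters through the genuine metric $g_k$ of Lemma~\ref{lem:gk}.

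Write $\omega=d\alpha$ and $\sigma=d(k\beta)$, so that $d\mu_t=\omega+t\sigma$. For $t=0$ we have $\mu_0=\alpha$, hence $X_0=W$, and there is nothing to prove. So fix $t\in(0,1]$. Subtracting the two defining equations $\iota_{X_t}(\omega+t\sigma)=-k\beta=\iota_W\omega$ gives, with $D:=X_t-W$,
\[
\iota_D\omega=-t\,\iota_{X_t}\sigma .
\]
Evaluating this identity on the vector $JD$, as in~\eqref{eq:energy}, yields
\[
\omega(D,JD)=-t\,\sigma(X_t,JD)=-t\,g_k(X_t,D).
\]
By weak taming~\eqref{eq:weak_tame} the left-hand side is $\ge0$. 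Since $t>0$, this gives $g_k(X_t,X_t-W)\le 0$, that is,
\[
|X_t|_{g_k}^2\le g_k(X_t,W).
\]

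Lemma~\ref{lem:gk} says $g_k$ is a symmetric, positive definite bilinear form. Cauchy--Schwarz for $g_k$ then bounds the right-hand side by $|X_t|_{g_k}|W|_{g_k}$. Dividing by $|X_t|_{g_k}$ when it is nonzero gives $|X_t|_{g_k}\le|W|_{g_k}$; the case $X_t=0$ is trivial.

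The only point needing care is the choice of test vector. Testing against $JX_t$ or $JW$ alone would produce a cross term $\omega(W,JX_t)$, which cannot be controlled, because $\omega(\cdot,J\cdot)$ is only weakly positive and not necessarily symmetric. Testing against $JD$ makes the $\omega$-contribution the single nonnegative quantity $\omega(D,JD)$. Everything else uses only the symmetry and positivity of $g_k$.
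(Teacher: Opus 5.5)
Your proof is correct and is essentially the paper's argument. The paper encodes $d\alpha$ through the operator $T$ with $g_k(Tv,w)=d\alpha(v,Jw)$, writes $W=X_t+tT^{-1}X_t$ and expands $|W|_{g_k}^2$; its cross term $2t\,g_k(Tw,w)=2t\,d\alpha(w,Jw)\ge0$, with $w=T^{-1}X_t=(W-X_t)/t$, is exactly your observation that weak taming applied to $X_t-W$ forces $g_k(X_t,X_t-W)\le0$. You then finish with Cauchy--Schwarz rather than by expanding the square, which is only a cosmetic difference.
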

\begin{proof}
This is a pointwise argument.
Define a linear operator \(T\) by
\(
g_k(Tv,w)=d\alpha(v,Jw).
\)
It is invertible since \(d\alpha\) is nondegenerate, and \(g_k(Tv,v)=d\alpha(v,Jv)\ge0\) by weak taming. For any \(v\),
\[
g_k( (T+t) X_t, v) = d \alpha(X_t, Jv) + t\, d(k\beta)(X_t, Jv) = \iota_{X_t} d\mu_t (Jv) = - k \beta (Jv) = \iota_W d\alpha (Jv) = g_k(TW, v),
\]
which implies
\(
(T+t)X_t=TW
\), i.e. \(W=X_t+tT^{-1}X_t\).
Hence, with \(w=T^{-1}X_t\),
\[
|W|_{g_k}^2
=|X_t|_{g_k}^2
+2t\,g_k(Tw,w)
+t^2|w|_{g_k}^2
\ge |X_t|_{g_k}^2. \qedhere
\]
\end{proof}

\begin{lemma}
The Moser flow \(\phi^1_t\) of \(\mu_t=\alpha+tk\beta\) exists for all \(t\in[0,1]\). In particular, there is a smooth function \(H^1\) with
\[
(\phi^1_1)^*(\alpha+k\beta)=\alpha+dH^1 .
\]
\end{lemma}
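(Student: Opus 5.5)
We apply Lemma~\ref{lem:complete} with the proper function \(P=\log S(R)\). This \(P\) is bounded below by \(0\) because \(S\ge1\), and it is proper because \(R\) is proper and \(S(r)\to\infty\) by~\eqref{eq:growth_B}. It then suffices to bound \(|X_tP|\) uniformly for \(t\in[0,1]\). On the compact set \(\{R\le r_0+1\}\) such a bound is automatic: \(X_t\) depends smoothly on \(t\in[0,1]\), since \(d\mu_t\) is nondegenerate for all \(t\), as noted above. So the work is on \(\{R\ge r_0+1\}\), where Lemma~\ref{lem:k} gives explicit formulas for \(k\) and \(m\).

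On that region, we first bound \(|X_t|_g\) by chaining the available estimates. Lemma~\ref{lem:gk} gives \(|X_t|_g\le m^{-1/2}|X_t|_{g_k}\). Lemma~\ref{lem:XW} then gives \(|X_t|_{g_k}\le|W|_{g_k}\), and Lemma~\ref{lem:gk} again gives \(|W|_{g_k}\le k^{1/2}|W|_g\). Since \(\iota_W d\alpha=-k\beta\), the definition of \(\norm{\omega^{-1}}_g\), the hypothesis~\eqref{eq:growth_B} and~\eqref{eq:a} give
\[
|W|_g\le \norm{\omega^{-1}}_g\,k\,|\beta|_{g}\le C\,B(R)\,k\,R^{1/2}.
\]
Combining these with \(|dR|_g\le R^{1/2}\) from~\eqref{eq:a}, we get
\[
|X_tR|\le R^{1/2}|X_t|_g\le C\,R\,B(R)\,k^{3/2}m^{-1/2}.
\]

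Now we substitute \(k=b\sqrt S/r\) and \(m=br/(2B^2\sqrt S)\) from Lemma~\ref{lem:k}. A direct computation gives
\[
k^{3/2}m^{-1/2}=\sqrt2\,b\,S\,B/r^2 ,
\]
and hence \(|X_tR|\le \sqrt2\,C\,b\,B(R)^2S(R)/R\). This is not bounded by itself, which is exactly why we use \(\log S\) rather than \(R\) as the proper function. Since \(S'(r)=r/B(r)^2\), we obtain
\[
|X_tP|=\frac{S'(R)}{S(R)}\,|X_tR|\le \frac{R}{B^2S}\cdot\frac{\sqrt2\,C\,b\,B^2S}{R}=\sqrt2\,C\,b ,
\]
which is a uniform constant.

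Lemma~\ref{lem:complete} then shows that the flow \(\phi^1_t\) exists for all \(t\in[0,1]\) and consists of diffeomorphisms. Formula~\eqref{eq:moser_exact}, applied with \(\mu_0=\alpha\) and \(\mu_1=\alpha+k\beta\), gives \((\phi^1_1)^*(\alpha+k\beta)=\alpha+dH^1\) with \(H^1=H_1\).

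The main obstacle is that the naive choice \(P=R\) fails. The key step is to notice that the growth condition \(\int^\infty r/B^2\,dr=\infty\) is precisely what makes \(\log S(R)\) a proper function. This function absorbs the factor \(B^2S/R\) in the estimate for \(|X_tR|\) exactly.
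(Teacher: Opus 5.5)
Your proposal is correct and follows the paper's proof essentially step for step. It uses the same chain $|X_t|_g\le m^{-1/2}|X_t|_{g_k}\le m^{-1/2}|W|_{g_k}\le (k/m)^{1/2}|W|_g$ via Lemmas~\ref{lem:gk} and~\ref{lem:XW}, the same substitution $k^{3/2}m^{-1/2}=\sqrt2\,b\,B(R)S(R)/R^2$ from Lemma~\ref{lem:k}, and the same proper function $P=\log S(R)$ in Lemma~\ref{lem:complete}.
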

\begin{proof}
We first bound \(W\), which does not depend on \(t\). On \(\{R\ge r_0\}\), the definition of \(\norm{\omega^{-1}}_g\), the assumption~\eqref{eq:growth_B} and~\eqref{eq:a} give
\[
|W|_g\le\norm{\omega^{-1}}_g\,|k\beta|_g\le C\,B(R)\,k\,\sqrt R .
\]
Combining with Lemmas~\ref{lem:XW} and~\ref{lem:gk},
\[
|X_t|_g\le\frac1{\sqrt m}|X_t|_{g_k}\le\frac1{\sqrt m}|W|_{g_k}\le\sqrt{\frac km}\,|W|_g\le C\,B(R)\,\sqrt R\,\frac{k^{3/2}}{\sqrt m},
\]
and hence, using \(|dR|_g\le\sqrt R\) once more,
\[
|X_tR|\le|dR|_g\,|X_t|_g\le C\,B(R)\,R\,\frac{k^{3/2}}{\sqrt m}.
\]
On \(\{R\ge r_0+1\}\) the explicit formulas of Lemma~\ref{lem:k} give \(k^{3/2}/\sqrt m=\sqrt2\,b\,B(R)S(R)/R^2\). Therefore
\[
|X_tR|\le\sqrt2\,Cb\,\frac{B(R)^2S(R)}R,
\qquad\text{so that}\qquad
|X_t\log S(R)|=\frac{S'(R)}{S(R)}\,|X_tR|\le\sqrt2\,Cb,
\]
because \(S'(r)=r/B(r)^2\). On the compact set \([0,1]\times\{R\le r_0+1\}\) the smooth function \(X_t\log S(R)\) is bounded anyway. Finally, \(S\) is increasing with \(S(r)\to\infty\), so \(P=\log S(R)\) is a proper function, bounded below by \(0\), with \(|X_tP|\) bounded uniformly in \(t\). By Lemma~\ref{lem:complete}, the flow \(\phi^1_t\) exists for \(t\in[0,1]\), and~\eqref{eq:moser_exact} gives the identity.
\end{proof}
This is the only place where the integral condition in~\eqref{eq:growth_B} is used: it is exactly what makes \(S\), hence \(P\), unbounded.

\subsection{The second path}
We now choose the large coefficient \(A\). Since \(R\) is proper, the numbers
\[
M_n\coloneqq1+\sup_{\{R\le r_0+n+1\}}|\alpha|_g,\qquad n=0,1,2,\dots
\]
are finite and nondecreasing in \(n\). Let \(\vartheta:\bR\to[0,1]\) be a smooth nondecreasing function with \(\vartheta=0\) on \((-\infty,0]\) and \(\vartheta=1\) on \([1,\infty)\), and set
\[
A(r)\coloneqq M_0+\sum_{j\ge1}(M_j-M_{j-1})\,\vartheta(r-r_0-j+1).
\]
The sum is locally finite, so \(A\) is smooth and nondecreasing, and \(A=M_0\) on \([1,r_0]\). If \(r\in[r_0+n,r_0+n+1]\), the first \(n\) summands have reached their full value, so \(A(r)\ge M_n\ge1+|\alpha_x|_g\) for every \(x\) with \(R(x)=r\). Hence
\begin{equation}\label{eq:A_props}
A\ge1\ge k,\qquad A'\ge0,\qquad A(R(x))\ge1+|\alpha_x|_g\quad\text{for all }x\in X.
\end{equation}
No upper bound on the growth of \(A\) is imposed. This is why Theorem~\ref{thm:bound} needs no growth condition on \(\alpha\), only on \(\norm{(d\alpha)^{-1}}_g\).

The path from \(\alpha+k\beta\) to \(\alpha+A\beta\) is a special case of the following lemma, which we will use again for the fourth path. For a smooth function \(\rho:[1,\infty)\to(0,\infty)\) which is constant on \([1,r_0]\), write
\[
g_\rho\coloneqq d(\rho(R)\beta)(\cdot,J\cdot),
\]
so that \(g_k\) is the form of the first path, and \(g_1=g\).

\begin{lemma}\label{lem:radial}
We have the following: 
\begin{enumerate}[label=\textup{(\alph*)}]
\item Suppose \(\rho+r\rho'>0\) on \([1,\infty)\). Then \(g_\rho\) is a Riemannian metric, i.e. \(d(\rho(R)\beta)\) is a \(J\)-compatible symplectic form. If \(\rho'\le0\) then \((\rho+R\rho')g\le g_\rho\le\rho g\), and if \(\rho'\ge0\) then \(g_\rho\ge\rho g\). Moreover, on \(\{R>r_0\}\), writing \(a\coloneqq|dR|_g^2\le R\) and \(D_\rho\coloneqq\rho(R)+a\,\rho'(R)\), we have \(D_\rho>0\), and the dual norm of \(dR\) is
\begin{equation}\label{eq:dual_dR}
|dR|_{g_\rho}^2=\frac a{D_\rho}.
\end{equation}
\item Let \(\sigma\) be a one-form on \(X\) with \(d\sigma(v,Jv)\ge0\) for all \(v\) (we allow \(d\sigma\) to be degenerate, e.g. \(\sigma=0\)). Let \(\rho_0\le\rho_1\) be two functions as in \textup{(a)}, and assume in addition that \(r\rho_0(r)\to\infty\) as \(r\to\infty\). Put \(\rho_u=(1-u)\rho_0+u\rho_1\) and
\[
\mu_u=\sigma+\rho_u(R)\,\beta,\qquad u\in[0,1].
\]
Then \(d\mu_u\) is nondegenerate, and the Moser flow of the path \(\mu_u\) exists for all \(u\in[0,1]\). The same holds for the reversed path \(u\mapsto\mu_{1-u}\).
\end{enumerate}
\end{lemma}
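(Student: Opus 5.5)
For part (a) I will repeat the computation of Lemma~\ref{lem:gk} with $k$ replaced by $\rho$. On $\{R\le r_0\}$, $\rho$ is constant, so $g_\rho$ is a positive multiple of $g$. On $\{R>r_0\}$ I write
\[
d(\rho\beta)=\rho\,d\beta-\rho'\,dR\wedge(dR\circ J).
\]
Both terms are $J$-invariant, which gives symmetry and
\[
g_\rho(v,v)=\rho|v|_g^2+\rho'\bigl(dR(v)^2+dR(Jv)^2\bigr),
\qquad 0\le dR(v)^2+dR(Jv)^2\le a|v|_g^2\le R|v|_g^2 .
\]
If $\rho'\le 0$, this gives $(\rho+R\rho')g\le(\rho+a\rho')g\le g_\rho\le\rho g$. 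If $\rho'\ge0$, it gives $g_\rho\ge\rho g$. In both cases $g_\rho$ is positive definite under the hypothesis $\rho+r\rho'>0$.

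The inequality $D_\rho>0$ follows because $D_\rho=\rho+a\rho'$ is a convex combination of $\rho$ and $\rho+R\rho'$, as $a\in[0,R]$. For \eqref{eq:dual_dR}, I use $Z_\beta=\nabla_gR$ together with $dR(JZ_\beta)=g(Z_\beta,JZ_\beta)=0$. These give
\[
g_\rho(Z_\beta,\cdot)=(\rho+a\rho')\,dR=D_\rho\,dR,
\]
so the $g_\rho$-dual of $dR$ is $Z_\beta/D_\rho$. Its squared norm is $g_\rho(Z_\beta,Z_\beta)/D_\rho^2=a/D_\rho$.

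For part (b), nondegeneracy is immediate. The quantity $\rho+r\rho'$ is linear in $\rho$, so each $\rho_u$ satisfies the hypotheses of (a). Then $d\mu_u(v,Jv)=d\sigma(v,Jv)+g_{\rho_u}(v,v)\ge|v|^2_{g_{\rho_u}}$, so $d\mu_u$ tames $J$. Write $\Delta=\rho_1-\rho_0\ge0$. The Moser field satisfies $\iota_{X_u}d\mu_u=-\Delta(R)\beta$. Since $\beta=-dR\circ J$, the energy identity \eqref{eq:energy} reads
\[
|X_u|_{g_{\rho_u}}^2\le d\mu_u(X_u,JX_u)=\Delta\,\beta(JX_u)=\Delta\,dR(X_u)\le\Delta\,|dR|_{g_{\rho_u}}|X_u|_{g_{\rho_u}}.
\]
Combining this with \eqref{eq:dual_dR} gives the key pointwise bound
\[
|X_uR|\le\Delta(R)\,\frac{a}{D_{\rho_u}}\le\Delta(R)\,\frac{R}{\rho_u+R\rho_u'}.
\]
The last inequality holds because $a\mapsto a/(\rho+a\rho')$ has derivative $\rho/D^2>0$, hence is increasing on $[0,R]$.

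The main obstacle is to turn this into completeness, since $\Delta$ may be huge (for instance $\rho_1=A$). A fixed function of $R$ will not suffice, so I will use a time-dependent proper function adapted to the path, namely $Q_u=\Psi(R\rho_u(R))$ for a suitable increasing $\Psi$. The first thing I will try is to compute the derivative of $\log(R\rho_u(R))$ along a trajectory:
\[
\frac{d}{du}\log\bigl(R\rho_u(R)\bigr)=\frac{\Delta}{\rho_u}+\frac{\rho_u+R\rho_u'}{R\rho_u}\,X_uR .
\]
By the bound above, the second term has absolute value at most $\Delta/\rho_u$. The first term $\Delta/\rho_u$ is not uniformly bounded, but it integrates in $u$ to $\log(\rho_1/\rho_0)$. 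I therefore expect the right comparison to be with a $u$-dependent normalization, for instance the quantity $\log(R\rho_0(R))$ or $R\rho_u/\rho_u$-type combinations chosen so that the $\Delta/\rho_u$ contributions cancel. The aim is to show that $R\rho_0(R)$ stays bounded above and below along each trajectory on $[0,1]$. The hypothesis $r\rho_0(r)\to\infty$ is exactly what makes such a quantity proper.

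Once a proper, bounded-below function with bounded derivative along the flow is in hand, I conclude as in Lemma~\ref{lem:complete}; the time-dependent version of that lemma has the same proof. The reversed path is handled in the same way, since $X$ only changes sign and the estimates are symmetric in $u$.
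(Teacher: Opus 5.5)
Part (a) is fine. Your computation of the dual norm via $g_\rho(Z_\beta,\cdot)=D_\rho\,dR$ is a tidy alternative to the paper's splitting into the plane spanned by $Z_\beta,JZ_\beta$ and its orthogonal complement. Nondegeneracy in (b) is also fine.

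The gap is completeness of the Moser flow, which is the real content of (b). Your last paragraph only says what you \emph{expect} to find, and never produces the controlling function. Moreover, the target you set yourself, a single proper function whose derivative along the flow is uniformly bounded, is not something your estimates can deliver. $\Delta$ may be arbitrarily large compared with $R$ (e.g.\ $\rho_1=A$, which has no growth bound), so your bounds on $X_uR$ and on $\frac{d}{du}\log(R\rho_u(R))$ are not uniform. The remark that $\Delta/\rho_u$ integrates to $\log(\rho_1/\rho_0)$ holds only at fixed $R$. Along a trajectory $R$ moves, so by itself it bounds nothing.

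The missing idea is one-sided monotonicity, and you lost it through a sign error. From $\iota_{X_u}d\mu_u=-\Delta\beta$ and $\beta(JX_u)=-dR(J^2X_u)=X_uR$, evaluating on $JX_u$ gives, on $\{R>r_0\}$,
\[
|X_u|^2_{g_{\rho_u}}+d\sigma(X_u,JX_u)=-\Delta\,X_uR ,
\]
not $+\Delta\,X_uR$. Your sign would make $R$ nondecreasing, which is the opposite of the truth. Hence $X_uR\le 0$ on $\{R>r_0\}$, so $R$ never increases there. Combined with a bound $X_uR\le C_+$ on the compact set $[0,1]\times\{R\le r_0\}$, this gives $R(x(u))\le R(x(s))+C_+$ for $u\ge s$. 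Passing to absolute values threw exactly this away.

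For backward time your own computation already suffices. Insert the lower bound $X_uR\ge-\Delta R/(\rho_u+R\rho_u')$ into your formula, using $\rho_u+R\rho_u'>0$. This gives $\frac{d}{du}\log(R\rho_u(R))\ge \Delta/\rho_u-\Delta/\rho_u=0$, so $Q_u=R\rho_u(R)$ is nondecreasing along trajectories in $\{R>r_0\}$. Add a constant $C_-$ from the compact region and use $Q_u(r)\ge r\rho_0(r)\to\infty$; this bounds $R(x(u))$ for $u\le s$.

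These two monotone quantities, $R$ forward and $Q_u(R)$ backward, are what the paper uses in place of a two-sided derivative bound. With them, every trajectory stays in a compact set, so the first criterion of Lemma~\ref{lem:complete} applies. The reversed path then follows by time reversal.
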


\begin{proof}
(a) On \(\{R\le r_0\}\), \(\rho'=0\) and \(g_\rho=\rho g\). On \(\{R>r_0\}\), exactly as in the proof of Lemma~\ref{lem:gk},
\[
g_\rho(v,w)=\rho\,g(v,w)+\rho'\bigl(dR(v)\,dR(w)+dR(Jv)\,dR(Jw)\bigr).
\]
Let \(\Pi\subset T_xX\) be the plane spanned by \(Z_\beta=\nabla_gR\) and \(JZ_\beta\). These two vectors are \(g\)-orthogonal and have the same length \(\sqrt a\). Since \(dR(v)=g(Z_\beta,v)\) and \(dR(Jv)=-g(JZ_\beta,v)\), the correction term vanishes as soon as \(v\) or \(w\) lies in \(\Pi^\perp\), and equals \(a\rho'\,g(v,w)\) for \(v,w\in\Pi\). Thus \(g_\rho=D_\rho\,g\) on \(\Pi\), \(g_\rho=\rho\,g\) on \(\Pi^\perp\), and \(\Pi\perp\Pi^\perp\) for both forms. (If \(a=0\), simply \(g_\rho=\rho g\).) Now \(D_\rho\ge\rho>0\) if \(\rho'\ge0\), and \(D_\rho\ge\rho+R\rho'>0\) if \(\rho'<0\), because \(a\le R\). This proves that \(g_\rho\) is positive definite, as well as the two-sided bounds. Finally, the covector \(dR=g(Z_\beta,\cdot)\) vanishes on \(\Pi^\perp\), so its dual norm is computed on \(\Pi\), where \(g_\rho=D_\rho g\): \(|dR|^2_{g_\rho}=|dR|_g^2/D_\rho=a/D_\rho\).

(b) Put \(\delta=\rho_1-\rho_0\ge0\), so that \(\dot\mu_u=\delta\beta\), and write \(g_u=g_{\rho_u}\), \(D_u=D_{\rho_u}\). Part (a) applies to \(\rho_u\), because \(\rho_u+r\rho_u'=(1-u)(\rho_0+r\rho_0')+u(\rho_1+r\rho_1')>0\). Together with the assumption on \(\sigma\), this gives
\[
d\mu_u(v,Jv)=d\sigma(v,Jv)+|v|_{g_u}^2>0\qquad(v\ne0).
\]
Hence \(d\mu_u\) is nondegenerate, and the Moser field \(X_u\), defined by \(\iota_{X_u}d\mu_u=-\delta\beta\), is smooth in \((u,x)\). On \(\{R>r_0\}\) we have \(\beta(JX_u)=-dR(J^2X_u)=X_uR\), so~\eqref{eq:energy} reads
\[
|X_u|_{g_u}^2+d\sigma(X_u,JX_u)=-\delta\,X_uR .
\]
The left hand side is \(\ge|X_u|^2_{g_u}\). Thus \(X_uR\le0\) wherever \(\delta>0\), while \(X_u=0\) wherever \(\delta=0\). Setting \(y=-X_uR\ge0\) and using Cauchy--Schwarz together with~\eqref{eq:dual_dR},
\[
y^2=dR(X_u)^2\le|dR|^2_{g_u}|X_u|^2_{g_u}\le\frac a{D_u}\,\delta\,y,
\]
so that, on \(\{R>r_0\}\),
\begin{equation}\label{eq:radial_bound}
0\le-X_uR\le\frac{\delta\,a}{D_u}.
\end{equation}
The first inequality says that \(R\) does not increase along trajectories in \(\{R>r_0\}\). This controls the flow in forward time. For backward time, consider \(Q_u(r)\coloneqq r\rho_u(r)\). Along a trajectory \(x(u)\) in \(\{R>r_0\}\), by~\eqref{eq:radial_bound}, \(\rho_u+R\rho_u'>0\) and \(a\le R\),
\[
\frac d{du}\,Q_u(R(x(u)))
=\delta R+(\rho_u+R\rho_u')\,X_uR
\ge\delta R-(\rho_u+R\rho_u')\frac{\delta a}{D_u}
=\frac{\delta\,\rho_u\,(R-a)}{D_u}\ge0 .
\]
So \(Q_u(R(x(u)))\) is nondecreasing in \(u\). Moreover \(Q_u(r)\ge r\rho_0(r)\eqqcolon P_0(r)\), where \(P_0(r)\to\infty\) by assumption.

On the compact set \([0,1]\times\{R\le r_0\}\), the two smooth functions \(X_uR\) and \((\partial_u+X_u)(Q_u(R))\) are bounded, say \(X_uR\le C_+\) and \((\partial_u+X_u)(Q_u(R))\ge-C_-\). By the above, these two inequalities then hold on all of \(X\). Consequently, a trajectory with \(x(s)=x\) satisfies
\[
R(x(u))\le R(x)+C_+\quad(u\ge s),\qquad
P_0(R(x(u)))\le Q_u(R(x(u)))\le Q_s(R(x))+C_-\quad(u\le s).
\]
Since \(R\) is proper and \(P_0(r)\to\infty\), the trajectory stays in a compact set in both time directions, and Lemma~\ref{lem:complete} applies. The reversed path \(\mu_{1-u}\) has Moser field \(-X_{1-u}\), whose trajectories are the time reversals of those of \(X_u\). So they exist on \([0,1]\) as well.
\end{proof}

We apply Lemma~\ref{lem:radial}(b) with \(\sigma=\alpha\), \(\rho_0=k\) and \(\rho_1=A\). The hypotheses hold: \(d\alpha(v,Jv)\ge0\) by~\eqref{eq:weak_tame}; \(m>0\) and \(rk(r)\to\infty\) by Lemma~\ref{lem:k}; \(A+rA'\ge A>0\) and \(k\le1\le A\) by~\eqref{eq:A_props}. We obtain the Moser flow \(\phi^2_u\) of the path \(\mu_u=\alpha+\bigl((1-u)k+uA\bigr)\beta\), and
\[
(\phi^2_1)^*(\alpha+A\beta)=\alpha+k\beta+dH^2 .
\]

\subsection{The third path}
Consider \(\mu_s=(1-s)\alpha+A\beta\), \(s\in[0,1]\), so that \(\dot\mu_s=-\alpha\) and \(\iota_{X_s}d\mu_s=\alpha\). Lemma~\ref{lem:radial}(a) gives \(g_A\ge Ag\), because \(A'\ge0\). With weak taming,
\[
d\mu_s(v,Jv)=(1-s)\,d\alpha(v,Jv)+|v|_{g_A}^2\ge A\,|v|_g^2 ,
\]
so \(d\mu_s\) is nondegenerate. Evaluating on \(JX_s\) as in~\eqref{eq:energy},
\[
A\,|X_s|_g^2\le d\mu_s(X_s,JX_s)=\alpha(JX_s)\le|\alpha|_g\,|X_s|_g,
\qquad\text{so}\qquad
|X_s|_g\le\frac{|\alpha|_g}{A}\le1
\]
by~\eqref{eq:A_props}. This is where the size of \(A\) is used. Consequently \(|X_sR|\le|dR|_g\le\sqrt R\) on \(\{R\ge r_0\}\), i.e. \(|X_s\sqrt R|\le\tfrac12\) there, and \(X_s\sqrt R\) is bounded on the compact set \([0,1]\times\{R\le r_0\}\). Lemma~\ref{lem:complete} with \(P=\sqrt R\) gives the flow \(\phi^3_s\), and
\[
(\phi^3_1)^*(A\beta)=\alpha+A\beta+dH^3 .
\]

\subsection{The fourth path}
Finally, let \(\mu_v=\bigl((1-v)A+v\bigr)\beta\), \(v\in[0,1]\), which goes from \(A\beta\) to \(\beta\). This is the reversed path in Lemma~\ref{lem:radial}(b) with \(\sigma=0\), \(\rho_0=1\) and \(\rho_1=A\); the hypotheses are clear, since \(r\rho_0(r)=r\to\infty\). We obtain the flow \(\phi^4_v\), and
\[
(\phi^4_1)^*\beta=A\beta+dH^4 .
\]
Composing the four paths completes the proof of Theorem~\ref{thm:bound}. \qed

\section{Proof of Corollary \ref{cor: quadratic poisson}} \label{sec: quadratic poisson}

We choose $R$ as in Lemma \ref{lem:stein}.
Let $\Pi_\Omega$ be the holomorphic Poisson bivector of $\Omega$.
Its pushforward along $z$ has components
$\{z_\mu,z_\nu\}_\Omega$ in the ambient coordinates.
Since $z$ is isometric for the induced metric, the quadratic
hypothesis gives
\[
 |\Pi_\Omega|_g
 \le C\left(1+\sum_{\nu=1}^N|z_\nu|^2\right)
 \le C'R.
\]
The real bivector $(\operatorname{Im}\Omega)^{-1}$ is obtained
from $\Pi_\Omega$ by a fixed real-linear operation, so
$
 \|(\operatorname{Im}\Omega)^{-1}\|_g\le C''R$.
Since $\Omega$ is holomorphic symplectic, for every real tangent vector $v$,
$\Omega(v,Iv)=i\Omega(v,v)=0.$
Hence $\operatorname{Im}\Omega$ is weakly tamed by $I$. The the result follows from Theorem \ref{thm:bound}, by taking \(B(r) = r\).
\qed

\begin{remark}
    Brackets of degree $d$ would give only $\norm{(\Im\Omega)^{-1}}_g \le C R^{d/2}$, and $B(r) = r^{d/2}$ satisfies \eqref{eq:growth_B} only for $d \le 2$.
\end{remark}

\section{Examples}\label{sec:applications}
There are several instances in the mirror symmetry literature where one is a priori interested in $\Fuk(Y, \lambda)$, but the authors compute instead $\Fuk(Y', \lambda')$ for some $(Y', \lambda')$ which is not checked to be Liouville deformation equivalent to the original space of interest.  This is typically because the original $Y$ is an affine algebraic variety and $\lambda$ the corresponding K\"ahler Liouville  form, whereas $\lambda'$ is chosen by hand to ensure the Lagrangian skeleton of $(Y', \lambda')$ is computable.  Here we mention a couple examples, and show in these cases, as a consequence of our results, that the corresponding Fukaya categories nevertheless agree.

\subsection{Very affine hypersurfaces}

\label{eg:hypersurface}
Let $H \subset (\bC^*)^n$ be a smooth algebraic hypersurface.  In \cite{Gammage-Shende, Zhou} the Lagrangian skeleton of $H$ is computed by way of a `tropical localization' $H_t$, $t \in [0,1]$, $H_0 = H$, constructed by the methods of \cite{Mikhalkin, Abouzaid}.  The symplectic form is the restriction of a toric K\"ahler form $\omega_\varphi = d\lambda_\varphi$, $\lambda_\varphi = -d\varphi \circ J$, for a potential $\varphi = \varphi(\log|z_1|, \dots, \log|z_n|)$ with
\begin{equation}\label{eq:hess}
\operatorname{Hess}\varphi \ge c\,I \qquad \text{for some } c > 0;
\end{equation}
for instance $\varphi = \frac12 \sum_i (\log|z_i|)^2$ in \cite{Gammage-Shende}, or the homogeneous potentials of \cite[Def.~2.8]{Zhou}.  The hypersurfaces $H_t$ are symplectic for $\omega_\varphi$ and symplectomorphic to each other \cite[Prop.~1.7, 1.8]{Zhou}, and the skeleton of $(H_1, \lambda_\varphi|_{H_1})$ is determined in \cite[Thm.~2]{Zhou}.  However, the symplectomorphism $\psi: (H, \omega_\varphi|_H) \to (H_1, \omega_\varphi|_{H_1})$ is not shown there to be a Liouville deformation; see \cite[Rem.~5.3.3]{Gammage-Shende}.  
More recently, the fact that it is a Liouville deformation has been shown in \cite{fltz} and formalized in Lean.

Here we note that Theorem~\ref{thm:bound} gives a direct route to the comparison of Fukaya categories.  Take $\alpha = \psi^*(\lambda_\varphi|_{H_1})$, a Liouville form of finite type on $H$ with $d\alpha = \omega_\varphi|_H$, and let $J$, $\beta$, $R$ and $g$ be as in Lemma~\ref{lem:stein} for the embedding $\iota: H \to \bC^{2n}$, $z \mapsto (z, z^{-1})$.  Then $d\alpha$ tames $J$ and $\norm{(d\alpha)^{-1}}_g \le \frac4c\,R$. 
So Theorem~\ref{thm:bound} applies with $B(r) = r$, and
\[
\Fuk(H, \beta) \cong \Fuk(H, \alpha) \cong \Fuk(H_1, \lambda_\varphi|_{H_1}).
\]

\subsection{Multiplicative hypertoric varieties}

We recall the multiplicative hypertoric varieties for which homological mirror symmetry was established in~\cite{Gammage-McBreen-Webster}.  

Let $T \subset (\bC^*)^n$ be a unimodular subtorus of dimension $k$ with weight matrix $A \in \operatorname{Mat}_{k \times n}(\mathbb Z)$, acting on
\[
Y = \{ (z, w) \in \bC^{2n} : 1 + z_i w_i \ne 0 \text{ for all } i \}, \qquad \Omega_Y = \sum_i \frac{dz_i \wedge dw_i}{1 + z_i w_i},
\]
with weights $A$ on the $z_i$ and $-A$ on the $w_i$.  The action is quasi-Hamiltonian with group valued moment map $\mu_\ell = \prod_i (1 + z_i w_i)^{A_{\ell i}}$, $1 \le \ell \le k$, and for generic $\tau \in (S^1)^k$ the multiplicative hypertoric variety $M_B = \mu^{-1}(\tau) /\!/ T$ is a smooth affine variety with holomorphic symplectic form $\Omega_B$ and complex structure $J_B$; it is the variety $\mathfrak U_{(\tau,1)}$ of \cite[Def.~2.3]{Gammage-McBreen-Webster}.

In \cite{Gammage-McBreen-Webster}, the wrapped Fukaya category of $M_B$ is computed for a Liouville structure $\lambda_{Dol}$ obtained as follows.  A hyperk\"ahler `Dolbeault' space $M_{Dol}$ is built by reduction from Ooguri--Vafa metrics on elliptic fibrations \cite[\S 7]{DMS}, \cite[\S 3]{Gammage-McBreen-Webster}.  One of its K\"ahler forms, $\omega_J$ in the notation of \cite[Thm.~3.6]{Gammage-McBreen-Webster}, admits a Liouville domain $D \subset M_{Dol}$ whose skeleton $L$ is the central fiber, a holomorphic Lagrangian \cite[Lem.~3.36, 3.37]{Gammage-McBreen-Webster}.  The completion $\widehat D$ is identified with $M_B$ by a diffeomorphism \cite[Prop.~3.46]{Gammage-McBreen-Webster}, constructed in \cite[Lem.~11.1.1]{DMS} by matching moment maps, and the resulting Liouville manifold is weakly Weinstein \cite[Lem.~3.47]{Gammage-McBreen-Webster}.  The relation between $\lambda_{Dol}$ and the K\"ahler Liouville form of the affine variety $M_B$ was however left open  \cite[Warning~3.2, Rem.~3.48]{Gammage-McBreen-Webster}.

We show that, for the K\"ahler Liouville form $\beta$ of the closed embedding $\iota: M_B \hookrightarrow \bC^N$ of Lemma~\ref{lem:ht-quadratic} below,
\begin{equation}\label{eq:ht-claim}
\Fuk(M_B, \beta) \cong \Fuk(\widehat D, \lambda_{Dol}).
\end{equation}

\reversemarginpar
\begin{lemma}\label{lem:ht-quadratic}
There is a closed embedding $\iota: M_B \hookrightarrow \bC^N$ whose coordinate functions have Poisson brackets, with respect to $\Omega_B$, which are polynomials of degree at most two in these coordinates.  For this embedding, the form $\beta$ of Lemma~\ref{lem:stein} is Weinstein.
\end{lemma}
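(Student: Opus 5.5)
The lemma has two parts: a closed embedding of $M_B$ whose coordinate functions have Poisson brackets of degree at most two, and the Weinstein property of the resulting $\beta$. I will construct the embedding explicitly from $T$-invariant functions on $Y$, compute brackets upstairs on $Y$, and push them down by Poisson reduction. The Weinstein property should then follow from standard Stein theory after a generic affine change of coordinates, which preserves the degree bound.

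\emph{Step 1: ambient coordinates on $Y$.} Put $u_i=(1+z_iw_i)^{-1}$. Then $(z,w,u)$ embeds $Y$ as the closed subvariety $\{u_i(1+z_iw_i)=1\}\subset\bC^{3n}$. From $\Omega_Y$ one has $\{z_i,w_i\}=1+z_iw_i$, with all other brackets between the $z$'s and $w$'s zero. The chain rule gives $\{z_i,u_i\}=-u_iz_i$ and $\{w_i,u_i\}=u_iw_i$. All of these are quadratic.

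\emph{Step 2: invariants and the quotient.} Since $T$ acts trivially on the $u_i$, the invariant ring $\bC[Y]^T$ is generated by the $u_i$ together with the invariant monomials $z^aw^b$, i.e.\ those with $A(a-b)=0$. The exponents of invariant monomials form the lattice points of a rational cone. I take as generators its Hilbert basis $m_1,\dots,m_p$, which includes $z_iw_i$, together with the $u_i$. For two monomials $m=z^aw^b$, $m'=z^cw^d$,
\[
\{m,m'\}=\sum_i(a_id_i-b_ic_i)\,\frac{mm'}{z_iw_i}\,(1+z_iw_i).
\]
The $mm'$ term is quadratic. The other term is the invariant monomial $mm'/(z_iw_i)$, which lies in the cone and is dominated by $mm'$. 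Here I use unimodularity of $T$: the cone is cut out by a totally unimodular system, so it has the integer decomposition property. I expect this to let me write $mm'/(z_iw_i)$ as a product of at most two Hilbert basis elements, possibly after enlarging the generating set by the finitely many invariant monomials of bounded exponent. Brackets $\{m_j,u_i\}$ are equal to $\pm u_i m_j$ times an integer, hence quadratic.

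Finally, $M_B=\mu^{-1}(\tau)/\!\!/T$ is the spectrum of $\bC[\mu^{-1}(\tau)]^T$. This ring is a quotient of $\bC[Y]^T$ because $T$ is reductive, so the same generators give a closed embedding $\iota:M_B\hookrightarrow\bC^N$. Quasi-Hamiltonian reduction identifies the bracket of $\Omega_B$ on restricted invariants with the restriction of the bracket on $Y$, so the degree bound descends. Checking this last identification in the quasi-Hamiltonian (group-valued moment map) setting needs a sentence, but it is standard for invariant functions.

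\emph{Step 3: Weinstein.} Post-compose $\iota$ with a generic translation $z\mapsto z-p$ of $\bC^N$. An affine substitution keeps brackets polynomial of degree $\le2$, so the first part of the lemma is unaffected. For generic $p$, the function $|z-p|^2|_{M_B}$ is Morse; this is the standard Andreotti--Frankel/Milnor argument. It has finitely many critical points, since the critical locus is algebraic and, by the curve selection lemma, has no critical points near infinity. Since $R$ is strictly plurisubharmonic, $Z_\beta=\nabla_gR$ by Lemma~\ref{lem:stein} is gradient-like for a Morse exhaustion with finitely many critical points, and is complete. So $\beta$ is Weinstein.

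The main obstacle is Step 2, specifically controlling the non-quadratic-looking term $mm'/(z_iw_i)$ so that it is expressible with degree at most two in the chosen generators. If the integer decomposition argument fails, I would instead pass to explicit coordinates: the unimodular structure gives local toric charts, and I would choose generators adapted to the hyperplane arrangement, with one $z$-type and one $w$-type monomial per circuit, and verify closure of the generating set under contraction by $z_iw_i$ directly.
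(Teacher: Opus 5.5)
Your Steps 1 and 3 and the descent argument are essentially the paper's. The gap is in Step 2, and it is not just that the key step is unproved: with your choice of generators it is false.

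\emph{A counterexample to the Hilbert basis choice.} Take $n=4$ and let $A$ be the reduced incidence matrix of a $4$-cycle, with rows $e_1-e_2,\ e_2-e_3,\ e_3-e_4$; this matrix is unimodular. The invariant monomials are $z^pw^r$ with $p-r\in\mathbb Z\,(1,1,1,1)$. The Hilbert basis is $x=z_1z_2z_3z_4$, $y=w_1w_2w_3w_4$, $h_j=z_jw_j$. Your bracket formula gives $\{x,y\}=4xy+\sum_i\prod_{j\ne i}h_j$. Here $xy/(z_1w_1)=h_2h_3h_4$ is a product of three Hilbert basis elements and of no fewer.

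Relations on $M_B$ do not rescue this. On $M_B$ the moment map makes every $h_j$ affine in $h_1$, and $xy=\prod_j h_j$. For the residual $\mathbb C^*$ acting on $x,y$ with weights $\pm1$, the weight-zero part of $\mathbb C[M_B]$ is $\mathbb C[h_1,(1+h_1)^{-1}]$. Comparing weight-zero parts shows that, for generic $\tau$, no polynomial of degree $\le 2$ in $x,y,h_j,u_j$ restricts to $\{x,y\}$. Your fallback, one $z$-type and one $w$-type monomial per circuit, gives the same generators here and fails the same way. Relatedly, the decomposition property you need is not a property of the cone. It is a property of the polytope $\ker M\cap[0,1]^{2n}$, with $M=[A\mid -A]$: every lattice point of its double must be a sum of two of its lattice points.

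\emph{The missing idea.} The paper takes as generators \emph{all} squarefree invariant monomials $m_s$, $s\in S=\ker M\cap\{0,1\}^{2n}$, together with the $u_i$. This is the enlargement you gesture at, but it has to be this specific set, and one must prove it is closed under brackets in degree two. The key lemma is: if $a\in\ker M\cap\mathbb Z_{\ge0}^{2n}$ has all entries at most $q$, then $a$ is a sum of $q$ elements of $S$.

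The proof has two steps. First, $M$ is totally unimodular: write $A=[I_k\mid A_0]$ and complete any nonzero minor to a maximal one using columns of $I_k$. Second, induct on $q$. The polytope $\{v:Mv=0,\ \max(0,a_j-q+1)\le v_j\le\min(1,a_j)\}$ contains $a/q$ and has integral vertices. An integral vertex $v$ lies in $S$, and $a-v$ has entries at most $q-1$.

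Now take $s,t\in S$ with $c_i\ne0$. The exponent $s+t-e_i-e_{n+i}$ has entries at most $2$, so the lemma with $q=2$ writes $m_sm_t/(z_iw_i)$ as a product of two generators. In the example above, this is why $h_2h_3h_4$ must itself be a generator.
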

\normalmarginpar

\begin{proof}
Change the basis of the character lattice of $T$ so that $A = [I_k \mid A_0]$.  Unimodularity says that every nonzero maximal minor of $A$ is $\pm1$.  A smaller nonzero minor of $A$ can be completed to a maximal minor by columns of $I_k$, so every minor of $A$, and hence every minor of $M = [A \mid -A]$, is $0$ or $\pm 1$.
Let $S = \ker M \cap \{0,1\}^{2n}$.  

We first show that if $a \in \ker M \cap \mathbb Z_{\ge 0}^{2n}$ has all entries at most $q$, then $a$ is a sum of $q$ elements of $S$, zero summands allowed.  We argue by induction on $q$.  The polytope
\[
P = \{ v \in \bR^{2n} : Mv = 0,\ \max(0, a_j - q + 1) \le v_j \le \min(1, a_j) \}
\]
contains $a/q$, so it is nonempty, and its vertices are integral because the minors of $M$ are $0, \pm 1$.
An integral vertex $v$ lies in $S$, and $a - v \in \ker M \cap \mathbb Z_{\ge 0}^{2n}$ has entries at most $q - 1$, qed.

Applying this to rational points after clearing denominators, and using that finitely generated cones are closed, we also get
\begin{equation}\label{eq:ht-cone}
\ker M \cap \bR_{\ge 0}^{2n} = \sum_{s \in S \setminus 0} \bR_{\ge 0}\, s .
\end{equation}

For $s = (p, r) \in S$ put $m_s = z^p w^r$; these are the squarefree $T$-invariant monomials, and by the decomposition they generate $\bC[z, w]^T$.  Adjoin $u_i = (1 + z_i w_i)^{-1}$. 
Localization at invariant functions commutes with invariants, and $T$ is linearly reductive, so the functions $m_s$, $s \ne 0$, and $u_i$ generate $\bC[Y]^T$ and then $\bC[M_B]$.  Let $\iota$ be the closed embedding they define.

The Poisson bracket of $\Omega_Y$ is $\{z_i, w_j\} = \delta_{ij}(1 + z_i w_i)$.  Differentiating, $\{m_s, u_i\} = (r_i - p_i)\, m_s u_i$, $\{u_i, u_j\} = 0$, and for $s = (p, r)$, $t = (p', r')$, $c_i = p_i r'_i - r_i p'_i$,
\begin{equation}\label{eq:ht-bracket}
\{m_s, m_t\} = \Bigl( \sum_i c_i \Bigr) m_s m_t + \sum_{i : c_i \ne 0} c_i\, \frac{m_s m_t}{z_i w_i} .
\end{equation}
If $c_i \ne 0$, the exponent vector of $m_s m_t / (z_i w_i)$ is $s + t - e_i - e_{n+i}$, which lies in $\ker M \cap \mathbb Z_{\ge 0}^{2n}$ with entries at most two, so it is a sum of two elements of $S$.  Hence all brackets are polynomials of degree at most two in the generators.  They descend to $M_B$, because $\{\mu_\ell, m_s\} = \mu_\ell m_s \sum_i A_{\ell i}(r_i - p_i) = 0$ and $\{\mu_\ell, u_i\} = 0$, so that the moment map equations generate a Poisson ideal of the invariant ring, and $\Omega_B$ is the reduced form.

Translating $\iota$ by a generic vector preserves the quadratic bracket property and makes $R = 1 + \frac14 |\iota|^2$ a Morse function.  Its critical set is a discrete semialgebraic set, hence finite, so $\beta = -dR \circ J_B$ is a Weinstein form.
\end{proof}

\begin{lemma}\label{lem:ht-completion} 
Let $(D, \lambda)$ be a compact Liouville domain and $(M, \omega)$ a symplectic manifold.  Suppose given a smooth family of embeddings $\sigma_c: D \to M$, $c \ge 0$, defined on neighborhoods of $D$, with
\[
\sigma_c^* \omega = e^c\, d\lambda, \qquad E_c := \sigma_c(D) \subset \operatorname{int} E_d \ (c < d), \qquad \bigcup_{c \ge 0} E_c = M .
\]
Suppose moreover that $D$ contains a compact Whitney stratified subset $L$ such that $H_1(L; \bR) \to H_1(D; \bR)$ is onto, $\lambda$ vanishes on the tangent spaces of the strata of $L$, and for each stratum $S$ the map $[0, \infty) \times S \to M$, $(c, x) \mapsto \sigma_c(x)$, takes values in a fixed isotropic submanifold of $M$.  Then $(M, \omega)$ is symplectomorphic to the completion $(\widehat D, d\widehat\lambda)$.
\end{lemma}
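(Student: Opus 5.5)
The plan is to compare $M$ with the completion $(\widehat D,\widehat\lambda)$ through the Liouville flow $\psi_c$ of $\widehat\lambda$, which satisfies $\psi_c^*d\widehat\lambda=e^c\,d\widehat\lambda$ and exhausts $\widehat D$ by the domains $D_c:=\psi_c(D)$. For each $c\ge0$ set $\Phi_c:=\sigma_c\circ\psi_{-c}:D_c\to M$. Then
\[
\Phi_c^*\omega=\psi_{-c}^*\,(e^c\,d\lambda)=d\widehat\lambda ,
\]
so each $\Phi_c$ is a symplectic embedding of $D_c$ onto $E_c$. These maps are not compatible with one another: $\Phi_d|_{D_c}\neq\Phi_c$ in general. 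The goal is to correct them by Hamiltonian diffeomorphisms of $M$ so that they become compatible, and then to take the union.

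\emph{Step 1 (the isotopy is exact).} For $c\le d$, the family $c'\mapsto\Phi_{c'}|_{D_c}$, $c'\in[c,d]$, is a symplectic isotopy of embeddings. Let $V_{c'}$ be its generating vector field along the image. The $1$-form $\iota_{V_{c'}}\omega$ on the image is closed, and I claim it is exact. Since $H_1(L;\bR)\to H_1(D;\bR)$ is onto, and $D_c$ retracts onto $D$, it suffices to check that its integral vanishes on cycles $\gamma$ carried by the strata of $L$. First, $L$ is $\psi$-invariant: $\lambda$ vanishes on the strata, so $L$ is skeleton-like and the Liouville field is tangent to the strata. I expect to argue this directly, or otherwise to replace $\psi_{-c}$ by the identity on $L$. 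Granting this, $\Phi_{c'}$ maps each stratum $S$ into the fixed isotropic submanifold containing $\sigma_{[0,\infty)}(S)$. The integral $\int_\gamma\iota_{V_{c'}}\omega$ equals the derivative in $c'$ of the $\omega$-area of the cylinder swept out by $\Phi_{c'}(\gamma)$. That cylinder lies in an isotropic submanifold, so the integral vanishes. Hence the isotopy is Hamiltonian.

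\emph{Step 2 (inductive gluing).} Set $\widetilde\Phi_0=\Phi_0$ on $D_0=D$. Suppose $\widetilde\Phi_n=G_n\circ\Phi_n$ on $D_n$ has been built, with $G_n$ a compactly supported Hamiltonian diffeomorphism of $M$. By Step 1 and the Hamiltonian isotopy extension theorem, the isotopy $\Phi_{c'}|_{D_n}$, $c'\in[n,n+1]$, extends to a Hamiltonian diffeomorphism $K$ with $K\circ\Phi_{n+1}|_{D_n}=\Phi_n$. Since the isotopy moves only points of $\bigcup_{c'\le n+1}E_{c'}=E_{n+1}$, I would cut off $K$ so that it is supported in a small neighbourhood of $E_{n+1}$, inside $\operatorname{int}E_{n+1+\epsilon}$. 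Then define
\[
\widetilde\Phi_{n+1}:=G_n\circ K\circ\Phi_{n+1}\quad\text{on }D_{n+1},
\]
which extends $\widetilde\Phi_n$ and is a symplectic embedding. The union $\widetilde\Phi=\bigcup_n\widetilde\Phi_n:\widehat D\to M$ is a symplectic embedding with open image.

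\emph{Step 3 (surjectivity), which I expect to be the main obstacle.} The corrections $G_n$ may push the image away from an exhaustion of $M$. I would first try to control the supports, arranging that $G_{n+1}$ restricts to the identity outside a neighbourhood of $E_{n+1}$ and maps $E_{n+1}$ into $E_{n+2}$. Then $\widetilde\Phi(D_{n+1})=G_n K(E_{n+1})$ is a compact domain whose boundary is squeezed between $\partial E_{n}$ and $\partial E_{n+2}$. The image therefore contains $\bigcup_n E_n=M$, or at least the image is closed and open, giving surjectivity.

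If this support control fails, the fallback is to reparametrise $c$ so that $\sup_n$ of the Hofer/displacement size of the corrections is tiny relative to the collars $E_{n+1}\setminus E_n$. One could also argue with $\omega$-volume of the compact pieces $E_n$ versus their images, using $\operatorname{vol}(E_c)=e^{nc}\operatorname{vol}(D)$, to rule out any missed region. Either way, once $\widetilde\Phi$ is surjective it is the desired symplectomorphism $(\widehat D,d\widehat\lambda)\cong(M,\omega)$.
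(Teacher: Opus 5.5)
Your overall architecture is the paper's: the maps $\Phi_c=\sigma_c\circ\psi_{-c}$, Hamiltonian corrections, and inductive gluing. However, two steps do not go through as written.

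\textbf{Step 1.} Your exactness argument rests on the claim that $L$ is invariant under the Liouville flow, and this does not follow from the hypotheses. The condition $\lambda|_{TS}=0$ only says $Z\in(TS)^{d\lambda}$. That forces $Z$ to be tangent to $S$ when $S$ is Lagrangian, but not when $S$ is a lower-dimensional isotropic stratum. For example, in the unit codisk bundle of $T^2$ take $L=\{q_2=0,\,p=(0,\tfrac12)\}\cup\{q_1=0,\,p=(\tfrac12,0)\}$. Every hypothesis on $L$ holds, yet $Z=p\,\partial_p$ is transverse to $L$. Without invariance, $\Phi_{c'}(S)$ need not lie in the isotropic submanifold containing $\sigma_{[0,\infty)}(S)$, and your cylinder argument has nothing to stand on.

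The missing idea is to split off the Liouville part of the velocity. Put $\eta_c:=\sigma_c^*(\iota_{\partial_c\sigma_c}\omega)$ on $D$; then $d\eta_c=e^c\,d\lambda$. Since $\partial_c\Phi_c=(\partial_c\sigma_c)\circ\psi_{-c}-d\sigma_c(Z)\circ\psi_{-c}$, one gets
\[
\Phi_c^*(\iota_{\partial_c\Phi_c}\omega)=\psi_{-c}^*\bigl(\eta_c-e^c\lambda\bigr).
\]
On each $TS$, $\eta_c$ vanishes because both arguments of $\omega$ are tangent to the fixed isotropic submanifold, and $\lambda$ vanishes by hypothesis. So the closed form $\eta_c-e^c\lambda$ has zero periods on $L$, and hence is exact on $D$. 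This is exactly where $\lambda|_{TS}=0$ is used; you only use it to (incorrectly) deduce invariance. In the paper's application the strata are Lagrangian and flow-invariant, so your argument would suffice there, but not for the lemma as stated.

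\textbf{Steps 2--3.} Surjectivity is left unproved, and the cause is your cutoff in Step 2. If $K$ is supported in a neighbourhood of $E_{n+1}$ inside $\operatorname{int}E_{n+1+\epsilon}$, then $G_n$ need not preserve $E_n$. The image of $\widetilde\Phi$ is then only $\bigcup_n G_n(E_n)$, and nothing you write shows this is $M$. The volume comparison cannot work, since equal volume does not force containment. Your ``squeezed boundary'' argument would need $G_{n+1}(\partial E_{n+1})$ to avoid $\operatorname{int}E_n$, which a support merely near $E_{n+1}$ does not give. Closedness of the image is not established either.

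The observation you need is that the trace of the isotopy lies compactly in the \emph{interior} of $E_{n+1}$:
\begin{itemize}
\item for $c'<n+1$, $\Phi_{c'}(D_n)\subset E_{c'}\subset\operatorname{int}E_{n+1}$;
\item for $c'=n+1$, $\Phi_{n+1}(D_n)=\sigma_{n+1}(\psi_{-1}(D))\subset\sigma_{n+1}(\operatorname{int}D)\subset\operatorname{int}E_{n+1}$.
\end{itemize}
So the Hamiltonian can be cut off inside $\operatorname{int}E_{n+1}$. Then $K$, and inductively $G_{n+1}$, is supported in $\operatorname{int}E_{n+1}$ and maps $E_{n+1}$ onto itself. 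Hence $\widetilde\Phi(D_n)=G_n(E_n)=E_n$ for every $n$, and surjectivity follows at once from $\bigcup_n E_n=M$, with no Hofer or volume estimates needed.
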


\begin{proof}
In brief: the hypotheses give symplectomorphisms on exhausting compact domains, not constrained to be compatible with the 1-form.  The condition on $L$ kills the flux of the transition isotopies, allowing compactly supported Hamiltonian corrections that make the maps compatible.

Let $\phi_c$ be the Liouville flow of $\widehat D$, $D_c = \phi_c(D)$ and $\psi_c = \sigma_c \circ \phi_{-c}: D_c \to E_c$.  The $D_c$ exhaust $\widehat D$, and $\psi_c^* \omega = \phi_{-c}^*(e^c\, d\lambda) = d\widehat\lambda$.

We first show that on each $D_a$, the family $\psi_c$, $c \ge a$, is a Hamiltonian isotopy.  Let $\eta_c = \sigma_c^*(\iota_{\partial_c \sigma_c}\, \omega) \in \Omega^1(D)$.  Differentiating $\sigma_c^*\omega = e^c\, d\lambda$ in $c$ gives $d\eta_c = e^c\, d\lambda$.  Both $\eta_c$ and $\lambda$ vanish on the tangent spaces of the strata of $L$: for $\eta_c$, the two vectors paired by $\omega$ are tangent to the same isotropic submanifold.  So the closed form $\eta_c - e^c \lambda$ has zero periods on $L$, hence on $D$, and $\eta_c - e^c\lambda = dh_c$ with $h_c$ smooth in $c$.  Since $\partial_c \psi_c = (\partial_c \sigma_c) \circ \phi_{-c} - d\sigma_c(Z \circ \phi_{-c})$, where $Z$ is the Liouville vector field,
\[
\psi_c^*(\iota_{\partial_c \psi_c}\, \omega) = \phi_{-c}^*(\eta_c - e^c \iota_Z d\lambda) = \phi_{-c}^*(\eta_c - e^c\lambda) = d(h_c \circ \phi_{-c}) .
\]

Fix $a < b$.  The set $\bigcup_{c \in [a, b]} \psi_c(D_a)$ is compactly contained in $\operatorname{int} E_b$: for $c < b$ because $E_c \subset \operatorname{int} E_b$, and for $c = b$ because $D_a \subset \operatorname{int} D_b$.  So the Hamiltonians of the isotopy can be extended and cut off inside $\operatorname{int} E_b$.  Their flow gives a Hamiltonian diffeomorphism $K_{a,b}$ of $M$, supported in $\operatorname{int} E_b$, with $K_{a,b} \circ \psi_a = \psi_b$ on $D_a$.

Choose $0 = c_0 < c_1 < \cdots \to \infty$ and define Hamiltonian diffeomorphisms $H_j$ supported in $\operatorname{int} E_{c_j}$ by $H_0 = \mathrm{id}$ and $H_{j+1} = H_j \circ K_{c_j, c_{j+1}}^{-1}$.  Then $\theta_j = H_j \circ \psi_{c_j}: D_{c_j} \to E_{c_j}$ is a symplectic diffeomorphism, since $H_j$ preserves $E_{c_j}$, and $\theta_{j+1} = \theta_j$ on $D_{c_j}$.  The $\theta_j$ define a symplectic embedding $\Theta: \widehat D \to M$ whose image contains every $E_{c_j}$, so $\Theta$ is onto.
\end{proof}

We now construct such a family for $M_B$.  Recall from \cite[Lem.~3.35]{Gammage-McBreen-Webster} that the Liouville vector field of $M_{Dol}$ satisfies $Z(F) \ge F$ for $F = \sum_i |q_i|^2$, where $q_i = x_i + i y_i$ are the complex moment maps of the circle actions.  So all the sublevel sets $\{F \le r^2\}$ are Liouville domains with the same completion $\widehat D$ and the same skeleton $L = \{F = 0\}$, the central fiber.

\begin{lemma}\label{lem:ht-family}
There is $r > 0$, a constant $\kappa \in \bR^*$, and embeddings $\sigma_c: D = \{F \le r^2\} \to M_B$, $c \ge 0$, defined on neighborhoods of $D$ and smooth in $c$, such that
\begin{equation}\label{eq:ht-conformal}
\sigma_c^* \bigl( \kappa^{-1} \Im\Omega_B \bigr) = e^c\, d\lambda_{Dol},
\end{equation}
the images $E_c = \sigma_c(D)$ satisfy $E_c \subset \operatorname{int} E_d$ for $c < d$ and $\bigcup_c E_c = M_B$, and each $\sigma_c$ maps each stratum of $L$ into a fixed holomorphic Lagrangian stratum of $M_B$.
\end{lemma}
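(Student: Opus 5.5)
The plan is to build $\sigma_c$ as the composite of two maps. The first is the diffeomorphism $\widehat D\cong M_B$ of \cite[Prop.~3.46]{Gammage-McBreen-Webster}, constructed in \cite[Lem.~11.1.1]{DMS} by matching moment maps. The second is a rescaling flow on $M_B$ that realizes the conformal factor $e^c$.

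The starting point is that $M_{Dol}$ and $M_B$ are both hyperk\"ahler-type reductions. We work with the holomorphic symplectic form on $M_{Dol}$ for which $\omega_J$ is one component. Its real or imaginary part is matched, via the DMS identification, with $\kappa^{-1}\Im\Omega_B$ on a neighborhood of the central fiber. The constant $\kappa$ absorbs the normalization (a factor of $2\pi$ or similar) between the Ooguri--Vafa periods and the multiplicative coordinates.

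So the first step is a single symplectic embedding $\sigma_0: D=\{F\le r^2\}\to M_B$ with $\sigma_0^*(\kappa^{-1}\Im\Omega_B)=d\lambda_{Dol}$. For $r$ small we take this from the local model near $L$. Both sides are then determined by the torus actions and the complex moment maps, which are $q_i$ on the Dolbeault side and $\log(1+z_iw_i)$-type quantities on the Betti side. We can therefore apply an equivariant Darboux/Moser argument near the common holomorphic Lagrangian $L$. Since $L$ is a union of toric strata that are holomorphic Lagrangian on both sides, we arrange the identification to send strata to strata.

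Next we produce the growth in $c$. The Liouville flow $\phi_c$ of $\lambda_{Dol}$ is conformally symplectic with factor $e^c$. We set
\[
\sigma_c=\Psi\circ\phi_c|_D,
\]
where $\Psi:\widehat D\to M_B$ is the DMS diffeomorphism. It remains to check that $\Psi$ is symplectic for $d\widehat\lambda_{Dol}$ and $\kappa^{-1}\Im\Omega_B$.

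This is the main obstacle. The DMS map matches moment maps but is not a priori symplectic globally; the paper itself notes this relation was left open. To avoid needing global symplecticity, we would try a second construction. Choose a family of holomorphic symplectic rescalings on $M_B$: the $\bC^*$-action scaling $z_i$ does not exist, but we use the flow of the real part of a vector field $V$ with $\mathcal L_V\Omega_B=\Omega_B$ on an exhausting region. Define $\sigma_c$ as $\sigma_0$ conjugated by this conformal flow, and verify two things. Nestedness $E_c\subset\operatorname{int}E_d$ follows if $V$ is outward transverse to $\partial E_c$. Exhaustion follows if $V$ is complete and proper-expanding; this is where $Z(F)\ge F$ from \cite[Lem.~3.35]{Gammage-McBreen-Webster}, transported through $\Psi$, is used.

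Finally, we check the stratum condition. The flow must preserve each holomorphic Lagrangian stratum of $M_B$ containing $\sigma_0(S)$. This holds because those strata are the fixed or invariant loci of the torus action and $V$ is chosen torus-equivariant and tangent to them. With these properties in place, Lemma~\ref{lem:ht-completion} applies.
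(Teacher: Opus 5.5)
There is a genuine gap: the proposal never actually constructs the family $\sigma_c$.

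\textbf{The first route is circular.} Setting $\sigma_c=\Psi\circ\phi_c|_D$ requires the diffeomorphism $\Psi:\widehat D\to M_B$ of \cite[Prop.~3.46]{Gammage-McBreen-Webster} to be symplectic for $d\widehat\lambda_{Dol}$ and $\kappa^{-1}\Im\Omega_B$. That is precisely the open point (\cite[Warning~3.2, Rem.~3.48]{Gammage-McBreen-Webster}) which this lemma, fed into Lemma~\ref{lem:ht-completion}, is designed to settle. If $\Psi$ were known to be symplectic, neither lemma would be needed.

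\textbf{The fallback has the same defect.} A holomorphic $V$ with $\mathcal L_V\Omega_B=\Omega_B$ would make $\Omega_B=d(\iota_V\Omega_B)$ exact wherever $V$ is defined. That fails on large regions: already for $M_B=Y$ ($n=1$, $k=0$) one has $\frac{dz\wedge dw}{1+zw}=d\log z\wedge d\log(1+zw)$ on $\{z\neq0\}$, with period $(2\pi i)^2=-4\pi^2$ on the tori $\{|z|=a,\ |1+zw|=b\}$.

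What you really need is a real Liouville field $V$ for $\Im\Omega_B$ with these properties:
\begin{enumerate}
\item it is complete and outward transverse to $\partial E_0$;
\item its flow exhausts $M_B$;
\item it is tangent to the Lagrangian toric strata, which also forces $\iota_V\Im\Omega_B$ to vanish on them.
\end{enumerate}
Having such a field is essentially the assertion that $(M_B,\kappa^{-1}\Im\Omega_B)$ is the completion of $E_0$, i.e.\ the conclusion itself. The one ingredient you offer, $Z(F)\ge F$ ``transported through $\Psi$'', does yield a complete expanding field $\Psi_*Z$. But $\Psi_*Z$ is Liouville for $(\Psi^{-1})^*d\widehat\lambda_{Dol}$, not for $\Im\Omega_B$, so it again presupposes that $\Psi$ is symplectic. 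Your nesting, exhaustion and stratum conditions are therefore only asserted conditionally on a $V$ that is never produced.

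\textbf{The missing idea.} The factor $e^c$ comes from deforming the hyperk\"ahler input, not from a flow on a fixed $M_B$.
\begin{itemize}
\item The paper replaces the Ooguri--Vafa potential by $V_c=V_0+c$. The same connection $\theta$ serves since $dV_c=dV_0$, and $\mathfrak Z_c$ now lives over $|q|<e^cr$.
\item Its Tate parameter is $p_c=e^{-c}p_0$. So $q\mapsto e^cq$ gives an $I$-biholomorphism $\Phi_c:\mathfrak Z_0\to\mathfrak Z_c$ with $\Phi_c^*\Omega_{I,c}=e^c\Omega_{I,0}$, hence $\Phi_c^*\omega_{J,c}=e^c\omega_{J,0}$.
\item The $c$-dependent Li coordinates $z_c=e^{cy}z_0$, $w_c=e^{-cy}w_0$ give $K$-holomorphic embeddings $j_c$ with $j_c^*\frac{dz\wedge dw}{1+zw}=2\pi\,\Omega_{K,c}$. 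Its imaginary part is $2\pi\,\omega_{J,c}$, which fixes $\kappa=2\pi$.
\item The composite $\sigma_c=j_c^n\circ\Phi_c^n$ then descends through the reduction.
\end{itemize}
Injectivity, nesting and exhaustion are genuine estimates on the log map $b=P_W\ell_0(y)+cy$. They use positive definiteness of $H_c=P_W\operatorname{diag}(V_c)|_W$, the monotonicity $\frac{d}{dc}|y(c)|^2\le0$, and a $c$-independent coercivity bound built from \eqref{eq:ht-cone}. The stratum condition holds because $y=0$ on $L$, so $z_c=z_0$ and $w_c=w_0$ there.

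Finally, your Step~1 needs no ``equivariant Darboux/Moser'' argument near the singular Lagrangian $L$, which is not a standard tool in any case. The map $\sigma_0=j_0^n|_D$ is given directly by Li's coordinates.
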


\begin{proof}
\emph{The Ooguri--Vafa space.}  $M_{Dol}$ is built from the Ooguri--Vafa space $\mathfrak Z$ \cite[\S 7.2]{DMS}, which is the Gibbons--Hawking construction on a circle bundle over an open subset of $(\bR/\mathbb Z)_t \times \bR^2_{(x, y)}$ with periodic harmonic function
\[
V_0(t, x, y) = C_0 + \frac{1}{2\sqrt{t^2 + x^2 + y^2}} + \sum_{m \ne 0} \Bigl( \frac{1}{2\sqrt{(t+m)^2 + x^2 + y^2}} - \frac{1}{2|m|} \Bigr),
\]
on the region $V_0 > 0$.  Let $\theta$ be the connection form, so that the three K\"ahler forms are $\omega_1 = dt \wedge \theta + V_0\, dx \wedge dy$ and its cyclic permutations.  We label the complex structures so that $I$ has holomorphic base coordinate $q = x + iy$, and we put
\[
\Omega_I = \omega_J + i\omega_K = i\,(V_0\, dt + i\theta) \wedge dq, \qquad \Omega_K = \omega_I + i\omega_J = i\,(V_0\, dy + i\theta) \wedge d\eta, \qquad \eta = t + ix .
\]
Then $\Omega_I$ is the algebraic holomorphic symplectic form of the elliptic fibration $q: \mathfrak Z \to \bC$ \cite[Lem.~7.2.11]{DMS}, and after reduction $d\lambda_{Dol} = \omega_J$.  Both formulas follow from the Gibbons--Hawking expressions.

\emph{Shifting the potential.}  For $c \ge 0$ let $V_c = V_0 + c$.  Since $dV_c = dV_0$, the same connection $\theta$ serves for all $c$.  Since $V_0 = -\log\sqrt{x^2 + y^2} + O(1)$ uniformly in $t$, there are $r, \delta > 0$ with
\begin{equation}\label{eq:ht-positive}
V_c \ge \delta \quad \text{on } \{ |q| \le e^c r \}, \qquad c \ge 0 .
\end{equation}
Indeed, choose $r_*, \delta$ with $V_0 \ge \delta$ on $|q| \le r_*$, then $C$ with $V_0 \ge -\log|q| - C$ for $|q| \ge r_*$, then $r < r_*$ with $-\log r - C \ge \delta$; for $|q| \le r_*$ use $V_c \ge V_0$, and otherwise $V_c \ge c - \log|q| - C \ge -\log r - C$.  Let $\mathfrak Z_c$ be the Ooguri--Vafa space with potential $V_c$ over $|q| < e^c r$.

On the universal cover of the smooth locus, $u_c = \exp \int (V_c\, dt + i\theta)$ is an $I$-holomorphic fiber coordinate, and the deck transformation $t \mapsto t + 1$ multiplies it by $\exp(\int_0^1 V_c\, dt + i \int_0^1 \theta)$.  So the fiber over $q$ is the Tate curve with parameter $p_c(q)$, $\log|p_c(q)| = -\int_0^1 V_c(t, x, y)\, dt = \log|p_0(q)| - c$ and $\arg p_c(q) = \arg p_0(q)$; thus $p_c = e^{-c} p_0$, and $p_0(q) = q$ up to a constant \cite[\S 7.1]{DMS}.  Hence
\begin{equation}\label{eq:ht-tate}
\Phi_c: \mathfrak Z_0 \to \mathfrak Z_c, \qquad (q_{Dol}, u_0) \longmapsto (q, u_c) = (e^c q_{Dol}, u_0)
\end{equation}
is a well defined biholomorphism for $I$ over $\{|q_{Dol}| < r\}$, which extends over the nodal fiber in the plumbing charts.  Now $\Omega_{I,c} = i\, d\log u_c \wedge dq$: both sides are $I$-holomorphic $2$-forms whose contraction with the generator of the circle action is $i\,dq$, because $q$ is the $(\omega_J + i\omega_K)$-moment map.  Therefore
\begin{equation}\label{eq:ht-scaling}
\Phi_c^* \Omega_{I,c} = e^c\, \Omega_{I,0}, \qquad \Phi_c^* \omega_{J,c} = e^c\, \omega_{J,0} .
\end{equation}

\emph{Li's coordinates.}  By \cite[Thm.~B.0.1]{DMS}, see also \cite[Thm.~3.8]{Gammage-McBreen-Webster}, $\mathfrak Z_c$ has a $K$-holomorphic embedding $j_c$ into $\{1 + zw \ne 0\} \subset \bC^2$ with $1 + z w = e^{2\pi i \eta}$, equivariant for the circle actions, with
\[
d\log z = V_c\, dy + i\theta + F(\eta, y)\, d\eta, \qquad d\log w = -V_c\, dy - i\theta + G(\eta, y)\, d\eta,
\]
where $F$ and $G$ do not depend on $c$ (they are  determined by the singularities of $V_0$), and the periods of the right hand sides are normalized to be zero or in $2\pi i \mathbb Z$.  At fixed $(t,x,y)$, we keep the same $\theta$ since $dV_c=dV_0$, and the explicit $F,G$ above are unchanged.
Thus $d\log z_c-d\log z_0=c\,dy$ and $d\log w_c-d\log w_0=-c\,dy$.
These exact differences leave the periods unchanged. Choosing the integration constants compatibly gives
\begin{equation}\label{eq:ht-li}
z_c = e^{cy} z_0, \qquad w_c = e^{-cy} w_0, \qquad 1 + z_c w_c = e^{2\pi i \eta} .
\end{equation}
We see $d\log z \wedge d\log w = (F + G)\,(V_c\, dy + i\theta) \wedge d\eta$, while $(F + G)\, d\eta = d\log(zw) = 2\pi i\, e^{2\pi i\eta}\, d\eta / (zw)$.  Hence
\begin{equation}\label{eq:ht-rotated}
j_c^* \frac{dz \wedge dw}{1 + zw} = \frac{zw}{1 + zw}\, j_c^*(d\log z \wedge d\log w) = 2\pi i\,(V_c\, dy + i\theta) \wedge d\eta = \kappa\, \Omega_{K, c}, \qquad \kappa = 2\pi .
\end{equation}

\emph{Reduction.}  The Dolbeault space is the quasi-hyperhamiltonian reduction of $\mathfrak Z^n$ by $T_{\bR}$ \cite[Def.~3.21]{Gammage-McBreen-Webster}; in the coordinates above its level set is $\{ A x = 0,\ A y = 0,\ e^{2\pi i A t} = \tau \}$.  The identifications $\Phi_c^n$ preserve this level set and the $T_{\bR}$-action, so they descend to the reduced spaces, and \eqref{eq:ht-scaling} descends as well.  On the Betti side, $\mu_\ell = \prod_i (1 + z_i w_i)^{A_{\ell i}} = e^{2\pi i (A(t + ix))_\ell} = \tau_\ell$ on the level set, so $j_c^n$ maps the level set into $\mu^{-1}(\tau)$, equivariantly for $T_{\bR} \subset T$.  Composing, we obtain smooth maps
\[
\sigma_c = j_c^n \circ \Phi_c^n : D = \{ F \le r^2 \} \longrightarrow M_B ,
\]
defined on neighborhoods of $D$ by \eqref{eq:ht-positive}.  By \eqref{eq:ht-scaling} and \eqref{eq:ht-rotated}, $\sigma_c^*(\kappa^{-1}\Im\Omega_B) = \sigma_c^* (\kappa^{-1} \Im \Omega_Y)|_{\text{reduced}} = e^c\, \omega_{J,0} = e^c\, d\lambda_{Dol}$, which is \eqref{eq:ht-conformal}.

\emph{Injectivity.}  Let $W = \ker(A: \bR^n \to \bR^k)$ and $P_W$ the orthogonal projection.  On the locus where all $z_i w_i \ne 0$, the functions $t_i, x_i$ are determined by $1 + z_i w_i$, and the class of $(\log|z_i|)_i$ modulo the positive real part of $T$ is recorded by $b = P_W (\log|z_i|)_i \in W$.  By \eqref{eq:ht-li}, for points of the level set,
\begin{equation}\label{eq:ht-logmap}
b = P_W\, \ell_0(y) + c\,y, \qquad y \in W, \qquad \partial \ell_{0,i} / \partial y_i = V_0(t_i, x_i, y_i),
\end{equation}
where $\ell_{0,i} = \log|z_{0,i}|$ depends only on $(t_i, x_i, y_i)$.  The RHS $y$-derivative is $H_c = P_W \operatorname{diag}(V_c(t_i, x_i, y_i))|_W$, which is positive definite by \eqref{eq:ht-positive}.  So $y \mapsto b$ is injective on the convex set $\{y \in W : |x|^2 + |y|^2 \le e^{2c} r^2\}$.  So two points of $D$ with the same image in $M_B$ lie in the same $T_{\bR}$-orbit.  On the strata where some $z_i$ or $w_i$ vanish, the same argument applies with $-\log|w_i|$ in place of $\log|z_i|$ when $z_i = 0$ (then $y_i \le 0$), and with $y_i = 0$ when both vanish.  So each $\sigma_c$ is injective, and an embedding since it is an immersion by \eqref{eq:ht-conformal}.

\emph{Nesting.}  Fix a point of $M_B$ in the open stratum and let $y(c) \in W$ be its moment coordinate in $E_c$, when it exists; $x$ does not depend on $c$.  Differentiating \eqref{eq:ht-logmap} at fixed $b$ gives $H_c\, y'(c) = -y(c)$, so
\[
\frac{d}{dc} |y(c)|^2 = -2 \langle H_c^{-1} y(c), y(c) \rangle \le 0 .
\]
In  moment coordinates, $E_c$ is the region $|x|^2 + |y|^2 \le e^{2c} r^2$.  The left side is nonincreasing in $c$ and the right side increasing, and the bound on $|y(c)|$ allows the solution $y(c)$ to be continued over every finite interval.  Hence $E_c \subset \operatorname{int} E_d$ for $c < d$; this extends to the other strata by continuity, since the inequality is strict.

\emph{Exhaustion.}  Fix a compact set $K \subset M_B$.  The invariant functions $1 + z_i w_i$, their inverses, and the $m_s$ of Lemma~\ref{lem:ht-quadratic} are bounded on $K$; in particular $x_i = -\frac{1}{2\pi} \log|1 + z_i w_i|$ is bounded on $K$.  For a point of $E_c \cap K$, take its representative on the level set, write $y_i^\pm = \max(\pm y_i, 0)$, and note that $y_i > 0$ forces $z_i \ne 0$ and $y_i < 0$ forces $w_i \ne 0$, since $y_i$ is a monotone function of $|z_i|^2 - |w_i|^2$ along the fibers \cite[Thm.~B.0.1]{DMS}.  We claim
\begin{equation}\label{eq:ht-coercive}
\sum_i \bigl( y_i^+ \log|z_i| + y_i^- \log|w_i| \bigr) \ge \delta\, |y|^2 - C\,(|y| + 1),
\end{equation}
with $C$ independent of $c$.  For $y_i \ge 1$, integrate $\partial_{y_i} \log|z_i| = V_c \ge \delta$ from $1$ to $y_i$; the value at $y_i = 1$ is bounded below independently of $c$ by \eqref{eq:ht-li}.  For $0 < y_i \le 1$, the singular term of $V_c$ gives $\log|z_i| \ge \frac12 \log y_i - C$, and $y \log y$ is bounded on $[0, 1]$.  The case $y_i < 0$ is the same with $w_i$.  On the other hand, $(y^+, y^-) \in \ker M \cap \bR^{2n}_{\ge 0}$, so by \eqref{eq:ht-cone} we can write $(y^+, y^-) = \sum_s a_s s$ with $a_s \ge 0$, and then
\[
\sum_i \bigl( y_i^+ \log|z_i| + y_i^- \log|w_i| \bigr) = \sum_s a_s \log|m_s| \le C_K \sum_s a_s \le C_K \|y\|_1 ,
\]
using that every coordinate appearing with positive exponent is nonzero, and $\sum_s a_s \le \sum_s a_s \|s\|_1 = \|y\|_1$.  Together with \eqref{eq:ht-coercive} this bounds $|y|$ on $E_c \cap K$ independently of $c$.  Since $\partial E_c = \{ |x|^2 + |y|^2 = e^{2c} r^2 \}$, we get $\partial E_c \cap K = \emptyset$ for $c$ large.  Finally $M_B$ is connected, being the closure of a torus bundle over the connected set $\{ Q \in (\bC^*)^n : Q^A = \tau \}$, $Q_i = 1 + z_i w_i$.  Joining a point of $M_B$ to $E_0$ by a path and applying the above to the compact image of the path, we see that the point lies in $E_c$ for $c$ large.

\emph{Strata.}  On the central fiber $L = \{F = 0\}$ we have $x = y = 0$.  The maps $\Phi_c$ preserve each component of the central fiber, and by \eqref{eq:ht-li} the coordinates $z_c, w_c$ agree with $z_0, w_0$ where $y = 0$.  So $\sigma_c(S) = \sigma_0(S)$ for each stratum $S$ of $L$, and $\sigma_0(S)$ is contained in a toric stratum of $M_B$ of the form $\{ z_i = 0\ (i \in I),\ w_i = 0\ (i \notin I) \}$, which is a holomorphic Lagrangian for $\Omega_B$.
\end{proof}

\begin{proof}[Proof of \eqref{eq:ht-claim}]
We check the hypotheses of Lemma~\ref{lem:ht-completion} for the family of Lemma~\ref{lem:ht-family}, with $\lambda = \lambda_{Dol}$, $\omega = \kappa^{-1}\Im\Omega_B$, and $L$ the central fiber.  The strata of $L$ are Lagrangian for $d\lambda_{Dol}$ and invariant under the Liouville flow, so $\lambda_{Dol} = \iota_Z d\lambda_{Dol}$ vanishes on their tangent spaces.  The negative Liouville flow carries $D$ into any neighborhood of $L$, by $Z(F) \ge F$, and such a neighborhood retracts onto $L$; as the flow is isotopic to the identity, $H_1(L; \bR) \to H_1(D; \bR)$ is onto.  The last hypothesis is the last assertion of Lemma~\ref{lem:ht-family}.  We obtain a symplectomorphism
$
\Theta: (\widehat D, d\lambda_{Dol}) \longrightarrow (M_B, \kappa^{-1}\Im\Omega_B)$.
Put $\alpha = (\Theta^{-1})^* \lambda_{Dol}$, a Liouville form of finite type on $M_B$ with $d\alpha = \kappa^{-1}\Im\Omega_B$.  The Poisson brackets for $\kappa^{-1}\Omega_B$ are $\kappa$ times those for $\Omega_B$, hence are quadratic by Lemma~\ref{lem:ht-quadratic}.  Corollary~\ref{cor: quadratic poisson}, applied with $\Omega=\kappa^{-1}\Omega_B$, gives $\Phi^*\beta=\alpha+df$.  Since $\beta$ is Weinstein by Lemma~\ref{lem:ht-quadratic}, and $\widehat D$ is weakly Weinstein, Theorem~\ref{thm:W_of_different_Liouville} gives
\[
\Fuk(M_B, \beta) \cong \Fuk(M_B, \alpha) \cong \Fuk(\widehat D, \lambda_{Dol}) ,
\]
the last category being the one computed in \cite{Gammage-McBreen-Webster}.
\end{proof}

\bibliographystyle{plain}
\bibliography{refs.bib}

\end{document}